\newcommand\no{100}
\newcommand\mbb{\mathbb}
\newcommand\N{\mbb{N}}
\newcommand\R{\mbb{R}}
\newcommand\Z{\mbb{Z}}
\theoremstyle{plain}
\newtheorem{Thm}[equation]{Theorem}
\newtheorem{Prop}[equation]{Proposition}
\newtheorem{Lemma}[equation]{Lemma}
\newtheorem{Conjecture}[equation]{Conjecture}
\newtheorem*{Thm*}{Theorem}
\newtheorem*{Prop*}{Proposition}
\newtheorem*{Cor*}{Corollary}
\newtheorem*{Lemma*}{Lemma}
\newtheorem*{Sublemma*}{Sublemma}
\newtheorem*{Conjecture*}{Conjecture}
\theoremstyle{definition}
\newtheorem{Def}[equation]{Definition}
\newtheorem{Example}[equation]{Example}
\newtheorem*{Construction*}{Construction}
\newtheorem*{Def*}{Definition}
\newtheorem*{Defs*}{Definitions}
\newtheorem*{Example*}{Example}
\newtheorem*{Examples*}{Examples}
\newtheorem*{LemmaDef*}{Lemma and Definition}
\newtheorem*{Notation*}{Notation}
\newtheorem*{Problem*}{Problem}
\newtheorem*{Question*}{Question}
\newtheorem*{Remark*}{Remark}
\newtheorem*{Remarks*}{Remarks}
\newtheorem*{Warning*}{Warning}
\title{Real Algebraic Geometry and its Applications}
\author{Tim Netzer}
\begin{document}

\begin{abstract}
This is a survey article on real algebra and geometry, and in particular on its recent applications in optimization and convexity. We first introduce  basic notions and results from the classical theory. We then explain how these relate to optimization, mostly via semidefinite programming. We introduce interesting geometric problems arising from the classification of feasible sets for semidefinite programming. We close with a perspective on the very active area of non-commutative real algebra and geometry.

\bigskip\bigskip
\begin{center}
{\it In memory of Murray Marshall}
\end{center}

\end{abstract}

\maketitle
\section{Real Algebra and Geometry}

The main objects of interest in classical algebraic geometry are varieties, i.e. solution sets of systems of polynomial equations. In order not to complicate this already hard topic, the varieties are often considered over an algebraically closed field. Broadly speaking, {\it real algebraic geometry} deals with real numbers as ground field instead. This involves considering varieties over the reals, but in fact much more. Since the real numbers admit an ordering, one can consider polynomial {\it inequalities}, leading to {\it semialgebraic sets}. It is also important in many proofs not to restrict to real numbers, but allow for general real closed fields (see for example \cite{bcr,ma, netznote, pd} for more detailed explanations):

\begin{Def} A field $R$ is {\it real closed}, if it does not contain a square root of $-1$, and $R(\sqrt{-1})$ is algebraically closed.
\end{Def}

\noindent
Any real closed field has characteristic zero, and the real numbers $\R$ are the standard example of a real closed field. In any real closed field $R$, one obtains a linear ordering by setting $$a\leqslant b \quad :\Leftrightarrow\quad b-a \mbox{ has a square root in } R.$$ This ordering is compatible with the algebraic structure, as it fulfills: $$a\leqslant b \Rightarrow a+c\leqslant b+c\quad \mbox{ and }\quad  0\leqslant a,b \Rightarrow 0\leqslant ab.$$ Any such compatible ordering on a field is called a {\it field ordering}. Now there are many more examples of real closed fields. For example, any ordered field $(F,\leqslant)$ admits an algebraic extension to a real closed field $R$, whose ordering extends the ordering on $F$. This is  called the {\it real closure of $(F,\leqslant)$}. For example, the field $\R(t)$ of rational functions admits a unique field ordering with $r<t$ for all $r\in\R$. Then $t^{-1}$ is positive and smaller than any positive real number. The real closure is thus a real closed field $R$ with infinitesimal elements. 

\begin{Def}
Let $R$ be a real closed field and $p_1,\ldots, p_r\in R[x_1,\ldots, x_n]$ polynomials. The set $$\mathcal W(p_1,\ldots, p_r)=\left\{ a\in R^n\mid p_1(a)\geqslant 0, \ldots, p_r(a)\geqslant 0\right\}$$ is called a {\it basic closed semialgebraic set}. A general {\it semialgebraic set} is a Boolean combination of basic closed semialgebraic sets.
\end{Def}

\noindent An important result on the geometry of semialgebraic sets is the Projection Theorem. It can be proven directly (a non-trivial proof!), but also deduced from a deep model theoretic fact, the so-called {\it quantifier elimination } in real closed fields. 

\begin{Thm}[Projection Theorem] Any polynomial image (for example a projection) of a semialgebraic set is again semialgebraic.
\end{Thm}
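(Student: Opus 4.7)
The plan is to reduce the statement to a single coordinate projection and then invoke quantifier elimination for real closed fields.

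First, I would observe that any polynomial map $f=(f_1,\ldots,f_m)\colon R^n\to R^m$ factors through its graph: write $\Gamma_f=\{(x,y)\in R^n\times R^m : y_i=f_i(x)\text{ for all } i\}$, which is a basic closed semialgebraic set (cut out by the inequalities $f_i(x)-y_i\geqslant 0$ and $y_i-f_i(x)\geqslant 0$). Then for any semialgebraic $S\subseteq R^n$ we have $f(S)=\pi\bigl((S\times R^m)\cap\Gamma_f\bigr)$, where $\pi\colon R^{n+m}\to R^m$ is the coordinate projection onto the last $m$ coordinates. Since Boolean combinations and Cartesian products of semialgebraic sets remain semialgebraic, it suffices to prove that coordinate projections preserve semialgebraicity; by induction on the number of coordinates eliminated, we may further reduce to the case $\pi\colon R^{n+1}\to R^n$.

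Next, I would translate this into a statement about first-order formulas. A semialgebraic set $T\subseteq R^{n+1}$ is, by definition, the solution set of a quantifier-free formula $\phi(x_1,\ldots,x_n,y)$ in the language of ordered rings with parameters in $R$. Its projection is then the solution set of $\exists y\,\phi(x_1,\ldots,x_n,y)$. The Tarski--Seidenberg quantifier elimination theorem for real closed fields asserts that this existential formula is equivalent to some quantifier-free formula, which by definition cuts out a semialgebraic subset of $R^n$.

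The real content lies in proving quantifier elimination itself. I would attack it by a direct sign-analysis argument: for a Boolean combination of polynomial sign conditions in $y$ whose coefficients are polynomials in $x$, one expresses the truth of $\exists y\,\phi$ purely in terms of sign conditions on the $x$-coefficients, using the Sturm--Tarski theorem and the theory of subresultants to count real roots of parametric families of univariate polynomials and to control the signs they take between consecutive roots. An equivalent route is cylindrical algebraic decomposition: one decomposes $R^{n+1}$ into finitely many semialgebraic cells on each of which every polynomial appearing in $\phi$ has constant sign, and shows that the decomposition is compatible with projection, so $\pi(T)$ becomes a finite union of cells in a corresponding decomposition of $R^n$.

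The main obstacle is precisely this last step: producing, uniformly in the parameter $x$, a finite list of polynomial sign conditions in $x$ that detects when the parametric polynomials admit a value of $y$ satisfying the prescribed Boolean combination of inequalities. The combinatorial bookkeeping of subresultant coefficients, or equivalently the recursive construction of a CAD, is the technical heart of the argument; by contrast, the initial reduction to single projections and the concluding transcription from formulas back to sets are essentially formal.
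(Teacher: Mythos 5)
The paper does not actually give a proof of the Projection Theorem; it simply notes that the result can be proven directly or deduced from quantifier elimination in real closed fields, and cites references. Your proposal follows the second, model-theoretic route that the paper alludes to, and the reduction chain is correct: the graph trick $f(S)=\pi\bigl((S\times R^m)\cap\Gamma_f\bigr)$ correctly reduces polynomial images to coordinate projections, the induction down to a single eliminated variable is standard, and the passage from sets to first-order formulas and back is exactly how the deduction from Tarski--Seidenberg goes. You are also right to identify where the genuine content lies, namely in quantifier elimination itself, and your sketch of the two standard ways to prove it (Sturm--Tarski sign counting with subresultants, or cylindrical algebraic decomposition) is accurate. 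Since you stop at a sketch of quantifier elimination rather than carrying it out, the proposal is an outline rather than a complete proof, but as the paper itself treats quantifier elimination as a cited black box, your level of detail matches the paper's; a fully self-contained proof would need to fill in the parametric root-counting argument you flag as the technical heart.
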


\noindent
In classical algebraic geometry, Hilbert's Nullstellensatz provides an algebraic certificate for solvability of a polynomial equation system over an algebraically closed field $K$; the system $$0=p_1(x_1,\ldots, x_n)=\cdots =p_r(x_1,\ldots, x_n)$$ has a solution in $K^n$ if and only if $1\notin (p_1,\ldots, p_r)$, the ideal generated by the equations in the polynomial ring $K[x_1,\ldots, x_n]$. This is a very helpful result, since the last condition can be checked with symbolic computation via Gr\"obner bases.

\noindent
The set $\mathcal W(p_1,\ldots, p_r)$ is the solution set of the system of polynomial {\it inequalities} $$p_1(x_1,\ldots, x_n)\geqslant 0, \ldots, p_r(x_1,\ldots, x_n)\geqslant 0$$ in $R^n$. One of the fundamental results in real algebra provides a similar characterization for solvability of this system. For this {\it Nichtnegativstellensatz} we need the notion of a preordering, replacing the ideal in Hilbert's Nullstellensatz:

\begin{Def}
Let $A$ be a commutative ring and $p_1,\ldots, p_r\in A$. The {\it preordering} $$\mathcal P(p_1,\ldots, p_r)$$ generated by $p_1,\ldots, p_r$ is the smallest set closed under addition and multiplication, containing $p_1,\ldots, p_r$ and all sums of squares. In closed form: $$\mathcal P(p_1,\ldots, p_r)=\left\{ \sum_{e\in\{0,1\}^r} \sigma_e\cdot p_1^{e_1}\cdots p_r^{e_r}\mid \sigma_e \mbox{ sum of squares in } A \right\}.$$ \end{Def}

\noindent
In the case of a polynomial ring $A=R[x_1,\ldots, x_n],$ the preordering $\mathcal P(p_1,\ldots, p_r)$ contains polynomials that are obviously nonnegative as functions on $\mathcal W(p_1,\ldots, p_r)$. A full characterization of nonnegative functions is the following:

\begin{Thm}[Nichtnegativstellensatz] Let $R$ be a real closed field and $p,p_1,\ldots, p_r\in R[x_1,\ldots, x_n]$. Then the following are equivalent: \begin{itemize} \item[(i)] $p\geqslant 0 \mbox{ on } \mathcal W(p_1,\ldots, p_r)$
\item[(ii)] $fp=p^{2e} + g$ for some $f,g\in \mathcal P(p_1,\ldots, p_r), f\neq 0, e\in\N.$ 
\end{itemize} In particular $\mathcal W(p_1,\ldots, p_r)=\emptyset$ if and only if $-1\in\mathcal P(p_1,\ldots, p_r).$
\end{Thm}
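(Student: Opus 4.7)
The direction (ii) $\Rightarrow$ (i) is immediate: for any $a\in\mathcal W(p_1,\ldots,p_r)$ every element of $\mathcal P(p_1,\ldots,p_r)$ evaluates to a nonnegative number at $a$, so $f(a),g(a)\geqslant 0$. If $p(a)<0$, then $f(a)p(a)\leqslant 0$, whereas $p(a)^{2e}+g(a)\geqslant 0$; equality of both sides forces $p(a)^{2e}=g(a)=0$ and hence $p(a)=0$, contradicting $p(a)<0$.

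For (i) $\Rightarrow$ (ii) I would argue contrapositively: assuming no identity $fp=p^{2e}+g$ with $f,g\in\mathcal P(p_1,\ldots,p_r)$ and $f\neq 0$ holds, I would produce a point $a\in\mathcal W(p_1,\ldots,p_r)$ with $p(a)<0$. The algebraic engine is an \emph{abstract Positivstellensatz} of Krivine--Stengle: in a commutative ring $A$ with preordering $T$, an element $a\in A$ lies in every ordering $P\supseteq T$ of $A$ if and only if $af=a^{2e}+g$ for some $f,g\in T$ and $e\in\N$. Applied in its contrapositive form to $A=R[x_1,\ldots,x_n]$, $T=\mathcal P(p_1,\ldots,p_r)$ and $a=p$, the failure of (ii) yields an ordering $P$ of $A$ with $T\subseteq P$ and $-p\in P\setminus(-P)$.

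From such an abstract ordering I would pass to geometry. The support $\mathfrak q:=P\cap(-P)$ is a prime ideal, and $P$ descends to an ordering on the domain $A/\mathfrak q$ which extends to its field of fractions $K$. Let $\widetilde R$ be the real closure of $K$; the residues $\bar x_1,\ldots,\bar x_n\in\widetilde R$ then satisfy $p_i(\bar x)\geqslant 0$ for all $i$ and $p(\bar x)<0$. This gives the desired point, but in a real closed extension of $R$ rather than in $R$ itself. To finish, I would invoke the Tarski transfer principle (a companion of the quantifier elimination underlying the Projection Theorem cited above): the first-order sentence $\exists x\,\bigl(\bigwedge_i p_i(x)\geqslant 0\wedge p(x)<0\bigr)$ holds over $\widetilde R$, hence over $R$, producing a point of $\mathcal W(p_1,\ldots,p_r)$ on which $p$ is negative. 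The ``in particular'' clause is then the special case $p=-1$: $\mathcal W(p_1,\ldots,p_r)=\emptyset$ says $-1\geqslant 0$ on this set (vacuously), which by (ii) is equivalent to $-f=1+g$ for some $f,g\in\mathcal P(p_1,\ldots,p_r)$, i.e.\ $-1\in\mathcal P(p_1,\ldots,p_r)$.

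The main obstacle is the abstract Positivstellensatz itself. Its standard proof is a Zorn's lemma construction: starting from $T$, one enlarges to the preordering $T+(-p)T=\{t_1-pt_2:t_i\in T\}$, which is again a preordering because $p^2$ is a square, and which is proper in the relevant sense precisely because the identity in (ii) is assumed to fail. One then takes an element $M$ maximal among preorderings extending $T+(-p)T$ and disjoint from a multiplicative set witnessing the strict negativity of $p$, and must verify that $M\cup -M=A$ and $M\cap -M$ is prime. The delicate step is the ``gluing'': if $b,-b\notin M$, one shows that the two strictly larger preorderings $M+bM$ and $M-bM$ would each have to meet the forbidden multiplicative set, and combining the resulting two identities yields one contradicting the hypothesis. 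This ordering-theoretic bookkeeping, together with separating strict from non-strict sign conclusions, is where the real work lies; the geometric descent and the Tarski transfer are then comparatively routine.
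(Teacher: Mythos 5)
Your proof is correct and follows the same conceptual route the paper sketches: the direction (i)~$\Rightarrow$~(ii) is reduced to an ordering-theoretic statement, an abstract ordering is realized geometrically in a real closed extension via the residue field and its real closure, and Tarski's Transfer Principle brings the witness point down to $R^n$. Note, though, that the paper only carries this out for the special case $r=0$ (Hilbert's 17th Problem), splitting it into Theorems~\ref{mth} and~\ref{int} and explicitly deferring the general Nichtnegativstellensatz with the remark that it is ``slightly more involved, but not conceptually different''; what you supply is exactly that deferred generality, by replacing Theorem~\ref{int} (every-ordering nonnegativity in a field implies sum of squares) with the relative, preordering version --- the Krivine--Stengle abstract Positivstellensatz --- and by passing through the support $P\cap(-P)$ rather than working directly in the rational function field. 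Two tiny points of hygiene: in (ii)~$\Rightarrow$~(i) your line ``forces $p(a)^{2e}=g(a)=0$'' silently assumes $e\geqslant 1$ (for $e=0$ the left side is $1$ and the contradiction is immediate rather than via $p(a)=0$), and in the ``in particular'' clause the step from $-f=1+g$ to $-1\in\mathcal P$ uses that $f+g=-1$ with $f,g\in\mathcal P$ and $\mathcal P$ closed under addition; both are trivial to repair but worth stating.
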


\noindent
The Nichtnegativstellensatz deserves some detailed remarks. First, the case $r=0$ corresponds to $\mathcal W(p_1,\ldots, p_r)=R^n$ and $\mathcal P(p_1,\ldots, p_r)$ the set of sums of squares of polynomials. In this case the statement simplifies to $$p\geqslant 0 \mbox{ on } R^n \quad\Leftrightarrow\quad q^2p=q_1^2+\cdots +q_s^2$$ for some $0\neq q,q_1,\ldots, q_s\in R[x_1,\ldots, x_n].$ In words: \begin{center}{\it Every globally nonnegative polynomial is a sum of squares of rational functions.} \end{center} This is precisely {\it Hilbert's 17th Problem}, solved by Artin in 1926 \cite{artin}. It is {\it not} possible to get rid of the denominator $q$ in the result. The {\it Motzkin polynomial} \cite{motz}$$x_1^4x_2^2+x_1^2x_2^4-3x_1^2x_2^2 +1$$ is nonnegative on $\R^2$, but not a sum of squares in $\R[x_1,x_2].$ It is even more surprising that this example was found only in 1967, since we know today that hardly any nonnegative polynomial is a sum of squares \cite{blekh}. For more technical and historical remarks on Hilbert's 17th Problem and the Nichtnegativstellensatz see \cite{pd}. 

\noindent
Second, since $-1\geqslant 0$ on $W$ if and only if $W=\emptyset$, solvability of the inequality system $$p_1(x_1,\ldots, x_n)\geqslant 0,\ldots, p_r(x_1,\ldots, x_n)\geqslant 0$$ is characterized by the condition $-1\notin \mathcal W(p_1,\ldots, p_r)$. Interestingly, this algebraic condition also admits an effective algorithmic approach, which is numerical however, in contrast to the symbolic approach to Hilbert's Nullstellensatz. We give some more detailed explanations in the next section.

\noindent
Third, we want to give an idea of the proof of Hilbert's 17th Problem. The proof of the general Nichtnegativstellensatz ist slightly more involved, but not conceptually different. One direction is clear; if $q^2p$ is a sum of squares, then $q^2p$ is globally nonnegative, and so is $p$ by continuity, since $q$ vanishes only on a low-dimensional set. The other direction splits up into two parts (Theorem \ref{mth} and Theorem \ref{int} below), and is far more complicated than the proof of  Hilbert's Nullstellensatz. Especially the first part relies again on hard model-theoretic facts. It translates geometric positivity of a polynomial to an abstract positivity in the field of rational functions:
\begin{Thm}\label{mth}
Let $R$ be a real closed field and $p\in R[x_1,\ldots, x_n]$ with $p\geqslant 0$ on $R^n$. Then for any field ordering $\geq$ of $R(x_1,\ldots, x_n)$ we have $p\geq 0$.
\end{Thm}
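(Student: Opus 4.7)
The plan is to argue by contradiction, using the transfer principle (model completeness) for real closed fields, which is precisely the ``hard model-theoretic fact'' alluded to in the paragraph above.

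Suppose the conclusion fails, i.e.\ there is a field ordering $\geqslant$ on $R(x_1,\ldots,x_n)$ for which $p<0$. Let $(K,\geqslant)$ be a real closure of the ordered field $\bigl(R(x_1,\ldots,x_n),\geqslant\bigr)$. Then $K$ is a real closed field containing $R$ as an ordered subfield, and the tuple $\xi:=(x_1,\ldots,x_n)\in K^n$, viewed inside the larger field $K$, satisfies $p(\xi)<0$. In particular the first-order sentence
$$\varphi \;:\; \exists y_1,\ldots,y_n\ \bigl(p(y_1,\ldots,y_n)<0\bigr)$$
in the language of ordered rings, with coefficients of $p$ as parameters from $R$, holds in $K$.

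Now I would invoke the model completeness of the theory of real closed fields (equivalently, the Tarski--Seidenberg transfer principle, or quantifier elimination): since $R\subseteq K$ are both real closed fields with compatible orderings, any existential sentence with parameters from $R$ that holds in $K$ already holds in $R$. Applied to $\varphi$, this produces a point $a\in R^n$ with $p(a)<0$, contradicting the hypothesis $p\geqslant 0$ on $R^n$. Hence no such ordering can exist, and $p\geqslant 0$ with respect to every field ordering of $R(x_1,\ldots,x_n)$.

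The conceptual work is entirely hidden in the transfer principle; once it is available, the argument is a short contradiction. The main obstacle, which the paper's remark signals, is therefore not the deduction itself but the underlying model theory of real closed fields: one needs the existence of the real closure of an arbitrary ordered field (to produce $K$ and the point $\xi$) together with quantifier elimination (to transfer the witness back down to $R$). Everything else is formal.
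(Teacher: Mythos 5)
Your proof is correct and follows essentially the same route as the paper: pass to the real closure of $R(x_1,\ldots,x_n)$ with the given ordering, observe that the tuple of indeterminates witnesses $p<0$ there, and transfer this witness back to $R^n$ to get a contradiction. The only cosmetic difference is that you phrase the transfer step via model completeness of real closed fields, while the paper packages the same fact as Tarski's Transfer Principle for semialgebraic sets (its Theorem \ref{tar}); these are equivalent.
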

\begin{proof} Assume $\geq$ is a field ordering of $R(x_1,\ldots, x_n)$ with $p<0$. Let $S$ be the real closure of $R(x_1,\ldots, x_n)$ with respect to this ordering. Then  the following semialgebraic set in $S^n$ is non-empty: $$\left\{ a\in S^n\mid p(a)<0\right\}.$$ In fact the tuple of variables $(x_1,\ldots, x_n)$, which are all elements in $R(x_1,\ldots, x_n)$ and thus in $S$, belongs to this set. Now by Tarski's Transfer Principle (see Theorem \ref{tar} below) the set contains a point from $R^n$, contradicting the fact that $p\geqslant 0$ on $R^n$.
\end{proof}

\noindent
The model theory is contained in the following result, which can again be deduced from the even stronger quantifier elimination mentioned above.

\begin{Thm}[Tarski's Transfer Principle]\label{tar}
Let $S/R$ be an extension of real closed fields. If a nonempty semialgebraic set in $S^n$ is defined by polynomials over $R$, then it contains a point from $R^n$.\end{Thm}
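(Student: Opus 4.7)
The plan is to deduce the statement from the quantifier elimination theorem for real closed fields, which was alluded to in the discussion preceding the Projection Theorem. Thus the main conceptual content — the fact that the first-order theory of real closed fields admits quantifier elimination in the language of ordered rings — will be used as a black box; the remaining work is just packaging.

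First I would write the semialgebraic subset of $S^n$ explicitly. Since it is defined by polynomials over $R$, it has the form
\[
X = \{a\in S^n\mid \varphi(a)\}
\]
where $\varphi(x_1,\ldots,x_n)$ is a quantifier-free formula in the language of ordered rings whose only parameters are elements of $R$ (built as a Boolean combination of atomic formulas $p(x)\geqslant 0$ or $p(x)=0$ with $p\in R[x_1,\ldots,x_n]$). The assumption that $X\neq\emptyset$ then says exactly that the sentence
\[
\psi \;:=\; \exists x_1\cdots \exists x_n\ \varphi(x_1,\ldots,x_n)
\]
holds in the real closed field $S$.

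Next I would apply quantifier elimination for the theory of real closed fields. Since $\psi$ is a first-order sentence with parameters in $R$, QE produces an equivalent quantifier-free sentence $\widetilde\psi$ with parameters still drawn from $R$. Such a $\widetilde\psi$ is a Boolean combination of polynomial equalities and inequalities among elements of $R$, and its truth value depends only on the ordered field operations on $R$. In particular, it takes the same truth value whether we evaluate it inside $R$ or inside any ordered extension. Since the equivalence $\psi\Leftrightarrow\widetilde\psi$ holds in every real closed field (this is the model-theoretic content of QE), and $\psi$ is true in $S$, we conclude $\widetilde\psi$ is true in $R$, hence $\psi$ is true in $R$, i.e. $X\cap R^n\neq\emptyset$.

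The only real obstacle is the quantifier elimination theorem itself, which is nontrivial and whose proof (via Sturm's theorem or via the model-completeness argument of Tarski–Seidenberg) is not elementary; once it is in hand, the transfer principle is a one-line corollary as above. An alternative repackaging avoiding the name ``quantifier elimination'' is to invoke the equivalent fact that $R\subseteq S$ is an elementary extension of real closed fields, so that the existential sentence $\psi$ reflects from $S$ down to $R$; this is just the model-completeness formulation of the same result.
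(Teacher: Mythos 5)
Your proposal is correct and matches the route the paper itself indicates: the paper does not prove Tarski's Transfer Principle but explicitly remarks that it ``can again be deduced from the even stronger quantifier elimination mentioned above,'' which is exactly the deduction you carry out (and your closing remark about model-completeness is the standard equivalent reformulation). You have simply filled in the routine packaging that the paper leaves implicit.
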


\noindent
The second part of the proof of the Nichtnegativstellensatz is easier to prove and of algebraic nature, see \cite{bcr, ma, netznote, pd}:

\begin{Thm}\label{int}
Let $K$ be a field and $p\in K,$ such that in each field ordering $\geq$ of $K$ we  have $p\geq 0$. Then $p$ is a sum of squares in $K$. \end{Thm}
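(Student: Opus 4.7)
My plan is to prove the contrapositive along Artin--Schreier lines: assuming $p$ is not a sum of squares in $K$, I construct a field ordering of $K$ in which $p<0$, contradicting the hypothesis. The argument is the algebraic analogue of the idea in the proof of Theorem \ref{mth}: one enlarges the cone of sums of squares in a controlled way and extracts an ordering via Zorn's lemma. As a preliminary, if $K$ is not formally real, then $-1$ is already a sum of squares, and the identity $a = ((a+1)/2)^{2} - ((a-1)/2)^{2}$ combined with $-1\in\Sigma K^{2}$ makes every element of $K$ a sum of squares; so I may assume $K$ is formally real, i.e.\ $-1\notin \Sigma K^{2}$.

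The first step is to enlarge $\Sigma K^{2}$ (the set of sums of squares in $K$) to a preordering that already forces $-p$ to be \emph{positive}. Concretely, I set $T_{0}:=\{\,s-pt\mid s,t\in\Sigma K^{2}\,\}$. A direct check shows $T_{0}$ is closed under addition and multiplication and contains all squares. The crucial verification is $-1\notin T_{0}$: if $-1=s-pt$ with $s,t\in\Sigma K^{2}$, then $t\neq 0$ (else $-1=s$ contradicts formal reality), and then $p=(s+1)/t=(s+1)t\cdot t^{-2}$ exhibits $p$ as a sum of squares (a product of a sum of squares with a square), contradicting our assumption on $p$.

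The core of the proof is the Artin--Schreier step, which I view as the main obstacle. By Zorn's lemma, applied to the poset of preorderings of $K$ containing $T_{0}$ but not containing $-1$ (nonempty, and closed under unions of chains), there is a maximal such preordering $T$. I would then show that any maximal proper preordering in a field is the positive cone of a field ordering, i.e.\ $T\cup(-T)=K$ and $T\cap(-T)=\{0\}$. For the first: given $a\notin T$, the set $T+aT$ is a preordering strictly containing $T$, so by maximality contains $-1$; writing $-1=t_{1}+at_{2}$ forces $t_{2}\neq 0$, and then $-a=(1+t_{1})t_{2}\cdot t_{2}^{-2}\in T$. For the second: if $0\neq a\in T\cap(-T)$, then $a^{-1}=a\cdot(a^{-1})^{2}\in T$, so $-1=(-a)\cdot a^{-1}\in T$, a contradiction.

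Finally, $T$ defines a field ordering via $x\leq y :\Leftrightarrow y-x\in T$. Since $-p = 0 - p\cdot 1 \in T_{0}\subseteq T$, we obtain $p\leq 0$; and since $0$ is a sum of squares while $p$ is not, $p\neq 0$, which sharpens this to $p<0$ in the constructed ordering. This contradicts the hypothesis that $p\geq 0$ in every field ordering of $K$, completing the proof.
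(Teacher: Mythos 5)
The paper does not actually supply a proof of Theorem \ref{int}; it simply points to the references \cite{bcr, ma, netznote, pd}. Your argument is precisely the classical Artin--Schreier proof that appears in those sources, so there is nothing to contrast beyond that: reduce to the formally real case, enlarge $\Sigma K^2$ to the preordering $T_0=\{s-pt : s,t\in\Sigma K^2\}$, check $-1\notin T_0$ using the hypothesis that $p\notin\Sigma K^2$, apply Zorn's lemma to obtain a maximal preordering $T\supseteq T_0$ with $-1\notin T$, and verify that such a maximal $T$ is the positive cone of a field ordering in which $p<0$. All the individual verifications you give (closure of $T_0$ under multiplication, $-1\notin T_0$, $T\cup(-T)=K$ via the $T+aT$ trick, $T\cap(-T)=\{0\}$) are correct.

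One small caveat, which concerns the statement as much as your proof: the reduction ``$K$ not formally real $\Rightarrow$ every element of $K$ is a sum of squares'' via the identity $a=\bigl((a+1)/2\bigr)^2-\bigl((a-1)/2\bigr)^2$ requires $\operatorname{char}K\neq 2$. In characteristic $2$ there are no field orderings, so the hypothesis is vacuous, yet (e.g.\ in $\mathbb F_2(t)$) not every element is a sum of squares, so the theorem as literally stated would fail. This is a standard implicit assumption in the real-algebraic setting of the paper, but it is worth stating explicitly when you invoke division by $2$.
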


\noindent
The Nichtnegativstellensatz yields {\it denominators} in the algebraic certificate, i.e. we have to multiply $p$ with some $f$ before we obtain a representation in $\mathcal P(p_1,\ldots, p_r).$ The first denominator-free result is Schm\"udgen's Theorem, which triggered a whole series of new developments.

\begin{Thm}[Schm\"udgen \cite{schm4}] Let $p,p_1,\ldots, p_r\in\R[x_1,\ldots, x_n]$ be such that $$\mathcal W(p_1,\ldots,p_r)\subseteq \R^n$$ is bounded. Then $p>0 \mbox{ on } \mathcal W(p_1,\ldots, p_r)$ implies $p\in \mathcal P(p_1,\ldots, p_r).$
\end{Thm}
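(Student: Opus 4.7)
The plan is to reduce Schm\"udgen's Theorem to an abstract representation theorem together with one key claim: boundedness of $\mathcal{W}(p_1,\ldots,p_r)$ forces the preordering $\mathcal{P}:=\mathcal{P}(p_1,\ldots,p_r)$ to be \emph{archimedean}, i.e.\ for every $q\in\R[x_1,\ldots,x_n]$ there is $N\in\N$ with $N\pm q\in\mathcal{P}$.

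The first ingredient is a Kadison--Dubois / Stone-type representation theorem for archimedean preorderings in a commutative $\R$-algebra $A$: if $T\subseteq A$ is an archimedean preordering and $a\in A$ satisfies $\phi(a)>0$ for every $\R$-algebra homomorphism $\phi\colon A\to\R$ with $\phi(T)\subseteq\R_{\geqslant 0}$, then $a\in T$. For $A=\R[x_1,\ldots,x_n]$ the characters are point evaluations, and those mapping $\mathcal{P}$ into $\R_{\geqslant 0}$ are precisely evaluations at points of $\mathcal{W}(p_1,\ldots,p_r)$. Thus the hypothesis $p>0$ on $\mathcal{W}$ translates exactly into the abstract positivity hypothesis, and everything reduces to showing that $\mathcal{P}$ is archimedean.

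Second, I would observe that archimedeanness of $\mathcal{P}$ follows from a single inclusion, namely $N-\sum_i x_i^2\in\mathcal{P}$ for some $N\in\N$. Given this, each $N-x_i^2$ lies in $\mathcal{P}$ (add $\sum_{j\neq i}x_j^2$, which is a sum of squares), identities such as $2Nx_i=(x_i+N)^2-(N^2+x_i^2)$ then give $N'\pm x_i\in\mathcal{P}$, and closure of $\mathcal{P}$ under sums and products propagates the bound to arbitrary monomials and hence arbitrary polynomials. So the whole problem collapses to producing $N-\sum_i x_i^2\in\mathcal{P}$ for suitable $N$.

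To obtain this inclusion, I would choose by boundedness an $N$ with $N-\sum_i x_i^2>0$ on $\mathcal{W}$, and invoke the Nichtnegativstellensatz in its strict-positivity form (which follows from the stated theorem applied to the empty set $\mathcal{W}(p_1,\ldots,p_r,-(N-\sum_i x_i^2))$) to get an identity $f(N-\sum_i x_i^2)=1+g$ with $f,g\in\mathcal{P}$. The main obstacle is now to \emph{eliminate} the denominator $f$. I would attack this by reducing to the one-variable case: on a bounded interval $[-M,M]$, polynomials nonnegative there admit a denominator-free representation in the preordering generated by $M^2-t^2$ (a Markov--Luk\'acs-type description). Combined with an induction on $n$ and a careful algebraic manipulation of the above rational identity---slicing along coordinate hyperplanes to apply the one-variable fact and then reassembling via the closure properties of $\mathcal{P}$---one should be able to replace the rational certificate by an honest element of $\mathcal{P}$. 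This denominator removal is unambiguously the hard step: the abstract Kadison--Dubois reduction and the sum-of-squares manipulations are routine, and the Nichtnegativstellensatz is given, but it is precisely this final passage from a rational to a polynomial certificate that is the technical heart of Schm\"udgen's theorem and the reason it was a genuine breakthrough.
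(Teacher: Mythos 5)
The paper itself gives no proof of Schm\"udgen's Theorem; it is stated as a black box with a reference, so there is nothing in the text to compare against, and your proposal must be judged on its own.

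Your high-level strategy is exactly the standard one (due essentially to W\"ormann, presented for instance in Prestel--Delzell or Marshall's books): reduce to showing the preordering $\mathcal{P}=\mathcal{P}(p_1,\ldots,p_r)$ is archimedean, invoke a Kadison--Dubois type representation theorem for archimedean preorderings, and observe that archimedeanness is equivalent to the single membership $N-\sum_i x_i^2\in\mathcal{P}$. All of that is correct and correctly ordered. But the argument you offer for the decisive step---turning the rational certificate $f\cdot(N-\sum_i x_i^2)=1+g$ from the Positivstellensatz into the denominator-free membership $N-\sum_i x_i^2\in\mathcal{P}$---is not workable as described. Slicing along coordinate hyperplanes gives you, for each fixed $(a_2,\ldots,a_n)$, a one-variable problem whose defining inequalities $p_i(x_1,a_2,\ldots,a_n)\geqslant 0$ are not in the canonical form that the Markov--Luk\'acs type descriptions require, and even where such a description does apply, the one-variable sums of squares depend on the parameters $(a_2,\ldots,a_n)$ in a non-polynomial way, so there is no mechanism to reassemble the slicewise certificates into a single identity in $\R[x_1,\ldots,x_n]$. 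The known routes past this obstruction are genuinely different: Schm\"udgen's original proof is operator-theoretic (a moment problem solved via the spectral theorem); W\"ormann's purely algebraic proof bootstraps the rational identity using the archimedean property of the auxiliary preordering $\mathcal{P}+(N-\sum_i x_i^2)\mathcal{P}$ together with a delicate recursive manipulation of the coefficients; and Schweighofer later gave yet another proof via P\'olya's theorem. You correctly flag this as the hard step and the real content of the theorem, but as written your sketch leaves it essentially open, and the route you gesture at would not close it.
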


\noindent Let us add some comments on Schm\"udgen's Theorem. First, it only holds for $\mathbb R$, not for arbitrary real closed fields.  Second, the boundedness of $\mathcal W(p_1,\ldots,p_r)$ is a necessary condition, as is the strict positivity of $p$ in general. Third, the result admits innovative applications to polynomial optimization, as we will demonstrate in the next section.

\noindent
We conclude with some remarks on the question whether $$\forall p \qquad p\geqslant 0 \mbox{ on } \mathcal W(p_1,\ldots, p_r) \Rightarrow p\in \mathcal P(p_1,\ldots, p_r)$$ can ever hold. 
If $n=1$, this is quite frequent. It holds true whenever the defining polynomials $p_i$ are chosen in the {\it canonical way} for the definition of $\mathcal W(p_1,\ldots,p_r)\subseteq\R$ (see \cite{km,kms}). Surprisingly, if $n\geqslant3$ and $\mathcal W(p_1,\ldots, p_r)$ has nonempty interior, it {\it never} holds. There are always nonnegative polynomials $p$ that do not belong to $\mathcal P(p_1,\ldots,p_r),$ no matter which and how many $p_i$ we choose to define the set \cite{sch1}. For $n=2$ the situation is quite subtle. For certain compact sets there is an affirmative answer by deep results of Scheiderer \cite{sch4}, and there is an interesting non-compact example by Marshall \cite{strip}.

\section{Optimization}\label{opt}
The results from the last section are closely related to optimization, mostly via semidefinite programming. 

\begin{Def}
A {\it semidefinite program} is an optimization problem of the following form:
\begin{align*}
\mbox{ minimize } &\quad c_1a_1+\cdots +c_na_n \\ 
\mbox{ subject to } &\quad  M_0+a_1M_1+ \cdots + a_nM_n \succeq 0,
\end{align*}
  where $c_1,\ldots, c_n\in\R,$ $M_0,M_1,\ldots, M_n\in {\rm Sym}_N(\R)$ are symmetric matrices, and $M\succeq 0$ means that $M$ is positive semidefinite.  
\end{Def}

\noindent So the feasible set of a semidefinite program is an affine-linear section of a cone of positive semidefinite matrices. Semidefinite programming is a generalization of linear programming. The feasible set $$\left\{ a\in\R^n \mid M_0+a_1M_1+\cdots +a_nM_n\succeq 0\right\}$$ is a  polyhedron if all matrices are diagonal. With non-diagonal matrices we obtain a larger class of sets.  For example, the condition $$\left(\begin{array}{cc}1 & 0 \\0 & 1\end{array}\right)  +a_1 \left(\begin{array}{cc}1 & 0 \\0 & -1\end{array}\right) +a_2 \left(\begin{array}{cc}0 & 1 \\1 & 0\end{array}\right)\succeq 0 $$ defines the unit disk in $\R^2$.  Solving a semidefinite program can be done with efficient numerical algorithms, mostly interior-point methods, and there is also a duality theory for semidefinite programs (see for example \cite{hbsdp}).

\noindent
The connection to polynomials and sums of squares is via {\it Gram matrices}. Let $d\in\N$ be a fixed degree, and $$\mathfrak m_d=(1,x_1,\ldots, x_n, x_1^2, x_1x_2,\ldots,x_1^d,\ldots , x_n^d)$$ be the vector of all monomials of degree $\leqslant d$. If $N$ denotes the size of $\mathfrak m_d$, then for any symmetric $N\times N$ matrix $M\in{\rm Sym}_N(\R)$ we obtain a polynomial $$p_M=\mathfrak m_d M \mathfrak m_d^t\in \R[x_1,\ldots, x_n]$$ of degree $\leqslant 2d$, and  any polynomial of degree $\leqslant 2d$ is of this form.

\begin{Def}
Any $M\in {\rm Sym}_N(\R)$ with $p_M=p$ is called a {\it Gram matrix } of $p$.
\end{Def}

\noindent
The connection between sums of squares and semidefinite programming relies essentially on the following observation:

\begin{Lemma}\label{sos} A polynomial $p\in\R[x_1,\ldots, x_n]$ with $\deg(p)\leqslant 2d$ is a sum of squares in $\R[x_1,\ldots, x_n]$ if and only if $p$ has a positive semidefinite Gram matrix of size $N$.
\end{Lemma}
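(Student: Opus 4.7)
The plan is to reduce both directions to elementary linear algebra, using the standard fact that a symmetric real matrix is positive semidefinite if and only if it factors as $B^t B$ for some real matrix $B$.

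For the ``if'' direction, I would start with a PSD Gram matrix $M\in{\rm Sym}_N(\R)$ of $p$ and choose a factorization $M=B^tB$ (via spectral decomposition, or Cholesky). Then
$$p = \mathfrak m_d M \mathfrak m_d^t = (B\mathfrak m_d^t)^t(B\mathfrak m_d^t),$$
which exhibits $p$ as the sum of squares of the polynomial entries of $B\mathfrak m_d^t$, each itself a polynomial of degree at most $d$.

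For the converse, suppose $p=\sum_{i=1}^s q_i^2$ in $\R[x_1,\ldots,x_n]$. The conceptually important first step is to argue that the summands $q_i$ may be taken of degree at most $d$. I would do this by inspecting top-degree components: if $e:=\max_i\deg(q_i)>d$, then the degree-$2e$ homogeneous part of $p$ equals $\sum_{i:\,\deg q_i=e}(q_i)_e^2$, where $(q_i)_e$ denotes the degree-$e$ homogeneous part of $q_i$. Since $2e>2d\geqslant\deg p$, this sum must vanish identically; but a finite sum of squares of real polynomials vanishes only when each summand vanishes, contradicting $\deg(q_i)=e$. Once every $q_i$ has degree at most $d$, I can write $q_i=\mathfrak m_d v_i$ uniquely for some coefficient column vector $v_i\in\R^N$, and then
$$p=\sum_{i=1}^s \mathfrak m_d v_i v_i^t \mathfrak m_d^t = \mathfrak m_d\Bigl(\sum_{i=1}^s v_i v_i^t\Bigr)\mathfrak m_d^t,$$
so that $M:=\sum_i v_iv_i^t$ is a PSD Gram matrix of $p$ of size $N$.

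The only real subtlety is the degree-bounding step in the converse direction; without it one only obtains a PSD Gram matrix of potentially larger size. The matrix factorization arguments on both sides are entirely routine once that reduction is in hand.
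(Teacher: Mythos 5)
Your proof is correct and follows essentially the same route as the paper: the converse direction first bounds the degrees of the summands $q_i$ by $d$ via the leading-homogeneous-part argument, then passes to a Gram matrix as a sum of rank-one PSD matrices, while the forward direction factors the PSD Gram matrix and reads off a sum of squares. The only cosmetic difference is that you write the factorization as $M=B^tB$ where the paper writes $M=\sum_i c_ic_i^t$, which is the same thing with $c_i^t$ the rows of $B$.
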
 
\begin{proof}
"$\Rightarrow$": Let $p=p_1^2+\cdots + p_s^2$ with polynomials $p_i$. It is easy to see that $\deg(p)\leqslant 2d$ implies $\deg(p_i)\leqslant d$ for all $i=1,\ldots, r$ (highest degree parts in squares are squares, and cannot cancel additively). So there are (column) vectors $c_i\in\R^N$ with $p_i=\mathfrak m_d c_i$. Then $$p=\sum_i p_i^2 =\sum_i (\mathfrak m_d c_i)(\mathfrak m_d c_i)^t= \mathfrak m_d\left( \sum_i c_ic_i^t \right)\mathfrak m_d^t,$$ and thus  $M=\sum_i c_ic_i^t$ is a positive semidefinite Gram matrix of $p$.

"$\Leftarrow$": Write $p=\mathfrak m_d M\mathfrak m_d^t$ for some positive semidefinite $M\in{\rm Sym}_N(\R)$. Every positive semidefinite matrix is a sum of rank one squares, i.e. there are vectors $c_i\in\R^N$ with $M=\sum_i c_ic_i^t$. Now $$p=\mathfrak m_dM\mathfrak m_d^t= \mathfrak m_d \left( \sum_i c_ic_i^t\right)\mathfrak m_d^t = \sum_i (\mathfrak m_d c_i)(\mathfrak m_dc_i)^t=\sum_i (\mathfrak m_dc_i)^2$$ is a sum of squares.
\end{proof}

\noindent
This observation is the key ingredient in Lasserre's hierarchy for polynomial optimization \cite{las2}. Given $p,p_1,\ldots, p_r\in \R[x_1,\ldots, x_n]$, the initial problem is to determine \begin{align*} \inf &\quad p(a) \\ \mbox{ s.t.} &\quad  a\in \mathcal W(p_1,\ldots,p_r).\end{align*} We will denote this problem by $(P)$, and its optimal value by $p^*$. It is a general constrained polynomial optimization problem, and thus hard to solve. In particular, there is no convexity or linearity involved. The idea now is to relax this problem to a series of easier ones. For fixed $d\in \N$ we consider the following problem, which we denote by $(P_d)$: \begin{align*} \qquad 
\sup \quad&\quad \lambda \\ \mbox{s.t.} \quad &\quad p-\lambda = \sum_{e\in \{0,1\}^r} \sigma_e p_1^{e_1}\cdots p_r^{e_r}, \quad  \sigma_e \mbox{ sums of squares of degree} \leqslant 2d.
\end{align*}
So we maximize $\lambda,$ such that  $p-\lambda$ admits a representation in the preordering $\mathcal P(p_1,\ldots, p_r)$, with a bound of $2d$ on the degree of the sums of squares $\sigma_e.$ The optimal value of $(P_d)$ is denoted by $\lambda_d^*$.

\begin{Thm}[Lasserre] With $p,p_1,\ldots,p_r\in \R[x_1,\ldots, x_n]$ as above we have:
\begin{itemize}
\item[(i)] Each $(P_d)$ is a semidefinite program.

\item[(ii)] The sequence $\left( \lambda^*_d\right)_{d\in\N}$ is monotonically increasing, with $\lambda_d^*\leqslant p^*$ for all $d$.  
\item[(iii)] If $\mathcal W(p_1,\ldots, p_r)$ is bounded, then $\lim_d \lambda_d^* =p^*.$
\end{itemize}
\end{Thm}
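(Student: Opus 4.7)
The three parts decouple cleanly, and the real depth lies in (iii), which is essentially a repackaging of Schm\"udgen's theorem.

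For (i), I would apply Lemma \ref{sos} to each $\sigma_e$ separately: being a sum of squares of degree $\leqslant 2d$ is equivalent to admitting a positive semidefinite Gram matrix $M_e\in \mathrm{Sym}_{N}(\R)$ with $N=\binom{n+d}{d}$. I would then take as decision variables $\lambda\in\R$ together with the symmetric matrices $(M_e)_{e\in\{0,1\}^r}$, impose the block-diagonal PSD constraint $\bigoplus_e M_e\succeq 0$, and translate the polynomial identity
\[ p-\lambda \;=\; \sum_e \bigl(\mathfrak{m}_d M_e \mathfrak{m}_d^t\bigr)\, p_1^{e_1}\cdots p_r^{e_r} \]
into a finite list of affine-linear equations by comparing coefficients of each monomial on both sides. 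Since the objective is the linear functional $\lambda$, this exhibits $(P_d)$ in standard semidefinite form.

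For (ii), monotonicity is immediate: any sum of squares of degree $\leqslant 2d$ is also a sum of squares of degree $\leqslant 2(d+1)$, so any tuple feasible for $(P_d)$ is feasible for $(P_{d+1})$, forcing $\lambda_d^*\leqslant \lambda_{d+1}^*$. For the inequality $\lambda_d^*\leqslant p^*$, I would simply observe that any feasible $\lambda$ produces $p-\lambda\in\mathcal P(p_1,\ldots,p_r)$, hence $p-\lambda\geqslant 0$ on $\mathcal W(p_1,\ldots,p_r)$, i.e. $\lambda\leqslant p(a)$ for every $a\in\mathcal W$; taking the infimum yields $\lambda\leqslant p^*$.

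For (iii), my approach is to feed Schm\"udgen's theorem an $\epsilon$-slack version of $p$ in order to convert nonnegativity into strict positivity. Given $\epsilon>0$, the polynomial $p-(p^*-\epsilon)$ satisfies $p(a)-(p^*-\epsilon)\geqslant \epsilon>0$ for every $a\in\mathcal W(p_1,\ldots,p_r)$. Since $\mathcal W$ is bounded, Schm\"udgen's theorem produces an identity
\[ p-(p^*-\epsilon)=\sum_e \sigma_e\, p_1^{e_1}\cdots p_r^{e_r} \]
with each $\sigma_e$ a sum of squares. Choosing $d_0$ large enough that all $\sigma_e$ have degree $\leqslant 2d_0$, the value $\lambda=p^*-\epsilon$ becomes feasible for $(P_{d_0})$, so $p^*-\epsilon\leqslant \lambda_{d_0}^*\leqslant \lim_d \lambda_d^*$. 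Letting $\epsilon\to 0$ and combining with (ii) gives $\lim_d \lambda_d^*=p^*$. The main obstacle is thus not internal to this proof at all but the deep external input of Schm\"udgen's theorem; once that is available, the argument is a short packaging exercise.
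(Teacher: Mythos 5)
Your proposal is correct and follows essentially the same route as the paper: Lemma \ref{sos} plus coefficient comparison for (i), the inclusion of degree-$2d$ representations into degree-$2(d+1)$ representations together with the nonnegativity of preordering elements on $\mathcal W$ for (ii), and the $\epsilon$-slack application of Schm\"udgen's Theorem for (iii). The only cosmetic difference is in (i): the paper fixes the constant term as the objective to minimize and compares all non-constant coefficients, while you keep $\lambda$ as an explicit scalar decision variable and compare all coefficients including the constant one — these are trivially equivalent SDP formulations.
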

\begin{proof}
(i) For two polynomials $p,q$ we write $p \sim q$ if $p$ and $q$ coincide up to the constant term. Now consider the following set: $$S= \left\{ (M_e)_{e\in\{0,1\}^r}\in {\rm Sym}_N(\R)^{2^r}\mid \forall e\ M_e\succeq 0,  \sum_e \mathfrak m_dM_e\mathfrak m_d^t \cdot p_1^{e_1}\cdots p_r^{e_r}\sim p\right\}.$$ It is not hard to see that $ S$ can be realized as an affine-linear section of the convex cone of all positive semidefinite matrices of size $2rN$. This involves building a large block-diagonal matrix from the matrices $M_e$, and comparing coefficients (except for the constant term) in the equation  $$\sum_e \mathfrak m_dM_e\mathfrak m_d^t \cdot p_1^{e_1}\cdots p_r^{e_r}=p.$$ Thus $S$ is the feasible set of a semidefinite program. Now $(P_d)$ just means minimizing the constant term in $\sum_e \mathfrak m_dM_e\mathfrak m_d^t \cdot p_1^{e_1}\cdots p_r^{e_r},$ which is linear in the entries of the matrices $M_e$. This uses Lemma \ref{sos}, i.e. the fact that each sum of squares $\sigma_e$ of degree $\leqslant 2d$ is of the form $\mathfrak m_d M_e\mathfrak m_d^t$ for some positive semidefinite matrix $M_e$. So $(P_d)$ is a semidefinite program.

(ii) It is clear that the values $\lambda_d^*$ increase with $d$. Now assume $p-\lambda$ has a representation as desired in $(P_d)$. Then $p-\lambda$ is obviously nonnegative on $\mathcal W(p_1,\ldots,p_r)$, since it belongs to $\mathcal P(p_1,\ldots, p_r)$. Thus $\lambda\leqslant p^*.$

(iii) For any $\epsilon >0$ we have $p-p^*+\epsilon >0$ on $\mathcal W(p_1,\ldots, p_r)$. By Schm\"udgen's Theorem thus $ p-p^*+\epsilon \in \mathcal P(p_1,\ldots, p_r)$. In such a fixed representation there is clearly an upper bound $2d$ on the degrees of the sums of squares $\sigma_e$, and thus $\lambda_d^* \geqslant p^*-\epsilon.$
\end{proof}

\noindent
This relaxation method for polynomial optimization is implemented in the free Matlab plugin {\it Yalmip} \cite{lof}. It works well in practice if the degree and the dimension of the involved polynomials is not too large.  The rate of convergence is closely linked to degree bounds in Schm\"udgen's Theorem, which are analyzed in \cite{pd,schw1,schw2}.

\noindent
Beyond being useful for polynomial optimization, semidefinite programming also raises some interesting geometric questions, that we will describe in the following section.

\section{Algebraic Convexity}

The feasible sets of semidefinite programming turn out to be of interesting geometric nature. They are called {\it spectrahedra} \cite{rago}:

\begin{Def}
A set $S\subseteq \R^n$ is called a {\it spectrahedron}, if there exist  symmetric matrices $M_0,\ldots,M_n$    such that $$ S=\left\{ a\in\R^n\mid M_0+a_1M_1+\cdots +a_nM_n\succeq 0\right\}.$$ Recall that $M\succeq 0$ means that $M$ is positive semidefinite. The expression $$M_0+x_1M_1+\cdots +x_nM_n$$ is called a {\it linear matrix polynomial}, and the expression $$M_0+x_1M_1+\cdots +x_nM_n\succeq 0$$ a {\it linear matrix inequality}.
\end{Def}

\noindent
It is straightforward to see that spectrahedra are closed, convex and even basic closed semialgebraic. The principal minors of $M_0+x_1M_1+\cdots +x_nM_n$ for example define $S$ as a basic closed semialgebraic set.

\begin{Example}
The convex hull of two disjoint  disks in the plane  is a closed, convex and semialgebraic set. It is however not basic closed semialgebraic, i.e. not definable by simultaneous polynomial inequalities. This is a nice exercise, see also \cite{sinn}. So it is not a spectrahedron. This set is called the {\it football stadium}.

\begin{figure}[ht]
\begin{center} 
\begin{tikzpicture}[scale=0.8]
\draw[->] (-1.5,0) -- (3.5,0) node[right]{};
\draw[->] (1,-1.5) -- (1,1.5) node[above]{};
\pgfsetstrokecolor{red};
\pgfsetfillpattern{north east lines}{red};
\filldraw[smooth,domain=1:0 ,samples=\no, thick] plot({-\x},{sqrt(1- \x^2)});
\filldraw[smooth,domain=1:0 ,samples=\no, thick] plot({-\x},{0-sqrt(1- \x^2)});
\filldraw[smooth,domain=1:0 ,samples=\no, thick] plot({\x+2},{sqrt(1- \x^2)});
\filldraw[smooth,domain=1:0 ,samples=\no, thick] plot({\x+2},{0-sqrt(1- \x^2)});
\fill (-1,0) -- (0,-1) -- (0,1) -- (-1,0) -- cycle; 
\fill (3,0) -- (2,-1) -- (2,1) -- (3,0) -- cycle;
\fill (0,1) -- (2,1) -- (2,-1) -- (0,-1) -- (0,1) -- cycle; 
\draw[-, thick] (0,1) -- (2,1) node[right]{};
\draw[-, thick] (0,-1) -- (2,-1) node[right]{};
\end{tikzpicture}
\end{center}
\end{figure}

\end{Example}

\noindent
But spectrahedra have more properties. For example, each face of a spectrahedron $S$ is exposed, i.e. realizable as the intersection of $S$ with a supporting hyperplane.

\begin{Example}
Consider the set $\left\{ (a,b)\in\R^2\mid a^3\leqslant b, -1\leqslant a,  0\leqslant b\leqslant 1\right\}.$ It is compact,  basic closed semialgebraic and convex, but has a non-exposed extreme point (the origin). It is thus not a spectrahedron.
\begin{figure}[ht]
 \begin{center}
\begin{tikzpicture}[scale=0.6]
\begin{scope}
\pgfsetstrokecolor{red};
\pgfsetfillpattern{north east lines}{red};
\draw[smooth,domain=0:2.6, thick] plot({\x},{(0.5*\x)^3});
\draw[thick] (-2.6,0) -- (-2.6,2.4);
\draw[thick] (-2.6,0)--(0,0);
\filldraw[smooth,domain=0:2.68, thick] plot({\x},{(0.5*\x)^3}) -- (-2.6,2.4) -- (-2.6,0) -- (0,0);
\end{scope}
\draw[->] (-2.5,0) -- (3,0) node[right]{};
\draw[->] (0,-1) -- (0,3) node[above]{};
\end{tikzpicture}
\end{center}
 \end{figure}

\end{Example}

\noindent
But these properties by far not  characterize spectrahedra. The crucial property is {\it hyperbolicity}, which in fact implies all the before mentioned properties.

\begin{Def}
Let $p\in \R[x_1,\ldots, x_n]$ and $e\in \R^n$. 

\noindent
(i) $p$ is called {\it hyperbolic with respect to $e$,} if $p(e)\neq 0$ and for each $v\in \R^n$, the univariate polynomial $$p_v(t):= p(e+tv)\in \R[t]$$ has only real roots.

\noindent
(ii) If $p$ is hyperbolic with respect to $e$, then  $$\mathcal H_e(p)=\left\{ a\in \R^n \mid \forall \lambda \in (0,1] : \ p(\lambda e+(1-\lambda)a) \neq 0 \right\}$$ is called the {\it hyperbolicity region of $h$ with respect to $e$}.
\end{Def}

\noindent
Geometrically, a polynomial $p$ is hyperbolic if any real line through $e$ intersects the complex hypersurface  of $p$ in only real points. The hyperbolicity region is the area within the innermost ring or zeroes of $p$ around $e$.  Interestingly, it can be shown that hyperbolicity regions are always convex and basic closed semialgebraic. They also have only exposed faces (see \cite{ren}).  
\noindent
Note that a hyperbolic polynomial is sometimes also called a {\it real zero polynomial} in the literature, and the hyperbolicity region a {\it rigidly convex set}; the notion of hyperbolicity is then used for a similar concept for 
homogeneous polynomials.

\begin{Example}(i) The polynomial $p=1-x_1^2-x_2^2\in\R[x_1,x_2]$ is hyperbolic w.r.t.\ $e=(0,0)$. The hyperbolicity region is the unit disk.
\begin{figure}[ht]
\begin{center}
\begin{tikzpicture}
\draw[->] (-1.3,0) -- (1.3,0) node[right]{};
\draw[->] (0,-1.3) -- (0,1.3) node[right]{};
\pgfsetstrokecolor{red};
\pgfsetfillpattern{north east lines}{red};
\filldraw[thick] (0,0) circle (1cm);
\end{tikzpicture}
\end{center}
\end{figure}

\noindent
(ii) The polynomial $p=1-x_1^4-x_2^4$ is not hyperbolic w.r.t.\ $e=(0,0)$ (or any other point). On any line through the origin, the quartic $p_v(t)\in\R[t]$ has $2$ real and $2$ strictly complex roots. It is thus not hard to see that the region $\left\{ (a,b)\in\R^2\mid a^4+b^4\leqslant 1\right\}$ is not the hyperbolicity region of any hyperbolic polynomial. This set is called the {\it TV-screen}.
\begin{figure}[!htbp]
\begin{center}
\begin{tikzpicture}
\draw[->] (-1.3,0) -- (1.3,0) node[right]{};
\draw[->] (0,-1.3) -- (0,1.3) node[right]{};
\pgfsetstrokecolor{red};
\pgfsetfillpattern{north east lines}{red};
\filldraw[smooth,domain=1:0 ,samples=\no, thick] plot({\x},{ sqrt(sqrt(1- \x^4))});
\filldraw[smooth,domain=1:0 ,samples=\no, thick] plot({-\x},{ sqrt(sqrt(1- \x^4))});
\filldraw[smooth,domain=1:0,samples=\no, thick] plot({\x},{0 -(sqrt(sqrt(1- \x^4)))});
\filldraw[smooth,domain=1:0,samples=\no, thick] plot({(-\x)},{0 -(sqrt(sqrt(1- \x^4)))});
\fill (1,0) -- (0,1) -- (-1,0) -- (0,-1) -- cycle; 
\end{tikzpicture}
\end{center}
\end{figure}

\end{Example}

\begin{Prop}
Every spectrahedron with nonempty interior is the hyperbolicity region of a hyperbolic polynomial.
\end{Prop}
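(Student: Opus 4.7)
The plan is to take the polynomial
\[
p(x) := \det M(x), \quad \text{where } M(x) := M_0 + x_1 M_1 + \cdots + x_n M_n,
\]
to choose a basepoint $e \in \mathrm{int}(S)$ with $M(e) \succ 0$, and then to verify both that $p$ is hyperbolic with respect to $e$ and that $\mathcal H_e(p) = S$.

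A preliminary reduction is needed to guarantee $M(e) \succ 0$ at an interior point, since otherwise $\det M(x)$ could even vanish identically. If $v \in \ker M(e)$ for some $e \in \mathrm{int}(S)$, then the linear function $a \mapsto v^T M(a) v$ is nonnegative on a neighborhood of $e$ in $S$ and vanishes at $e$, hence vanishes identically. For $a \in S$ this, combined with $M(a) \succeq 0$, forces $M(a) v = 0$; but $a \mapsto M(a) v$ is affine and vanishes on the open set $\mathrm{int}(S)$, hence identically, so $v$ lies in the common kernel $\bigcap_{i=0}^n \ker M_i$. Quotienting by this common kernel yields a strictly smaller pencil defining the same spectrahedron, and after finitely many reductions one obtains a representation with $M(e) \succ 0$ for every $e \in \mathrm{int}(S)$.

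Hyperbolicity of $p$ at $e$ is then immediate: $p(e) = \det M(e) > 0$, and for $v \in \R^n$, writing $M(e) = L L^T$ with $L$ invertible and $M_v := \sum_i v_i M_i$, we have
\[
p(e+tv) = \det(LL^T + tM_v) = (\det L)^2 \det\bigl(I + t \cdot L^{-1} M_v L^{-T}\bigr).
\]
The matrix $L^{-1} M_v L^{-T}$ is symmetric, so its eigenvalues $\mu_1,\dots,\mu_N$ are real, and the roots of $p(e+tv)$ in $t$ are exactly the real numbers $-1/\mu_i$ (over the nonzero $\mu_i$).

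Finally, to identify the hyperbolicity region: if $a \in S$ and $\lambda \in (0,1]$, then $M(\lambda e + (1-\lambda)a) = \lambda M(e) + (1-\lambda) M(a)$ is positive definite as a strictly positive combination of a positive definite and a positive semidefinite matrix, so $p(\lambda e + (1-\lambda) a) > 0$ and $a \in \mathcal H_e(p)$. Conversely, if $a \in \mathcal H_e(p)$, then $M(\lambda e + (1-\lambda) a)$ is nonsingular for every $\lambda \in (0,1]$ and positive definite at $\lambda = 1$; by continuity of the eigenvalues along the segment, none can cross zero, so the matrix remains positive definite throughout, and $M(a) \succeq 0$ follows upon taking $\lambda \to 0^+$. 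The one step demanding real care is the initial reduction ensuring $M(e) \succ 0$; everything afterwards is elementary linear algebra.
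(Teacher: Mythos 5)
Your proposal is correct and follows essentially the same approach as the paper's sketch: reduce to a pencil that is positive definite at an interior point (the paper normalizes further to a monic pencil $I + \sum x_i M_i$, which is equivalent after conjugating by $L^{-1}$), observe that hyperbolicity follows from symmetric matrices having real eigenvalues, and check that the hyperbolicity region is the spectrahedron. The paper explicitly skips the technical reduction and the final identification of $\mathcal H_e(p)$ with $S$; your common-kernel argument and the eigenvalue-continuity argument supply exactly those missing details, correctly.
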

\begin{proof}[Sketch of proof] Assume without loss of generality that the origin belongs to the interior of the spectrahedron $S$. Then $S$ can be defined by a {\it monic} linear matrix inequality, i.e. $$S=\left\{ a\in \R^n\mid I+a_1M_1+\cdots +a_nM_n\succeq 0\right\}.$$ This involves some technical details that we skip. Then $$p=\det(I+x_1M_1+\cdots +x_nM_n)\in\R[x_1,\ldots, x_n]$$ is hyperbolic with respect to the origin. This follows easily from the fact that symmetric matrices have only real Eigenvalues. It is then also not hard to see that the hyperbolicity region of $p$ coincides with $S$.
\end{proof}

\noindent
So the TV-screen is not a spectrahedron, although is is convex, basic closed semialgebraic and has only exposed faces. 
One of the main open questions concerning spectrahedra is the following. If true, it would classify spectrahedra in terms of the behavior of their boundary surface. 

\begin{Conjecture}[Geometric Lax Conjecture]
Every hyperbolicity region is a spectrahedron.
\end{Conjecture}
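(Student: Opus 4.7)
The plan is to produce, from a polynomial $p$ hyperbolic with respect to $e$, symmetric matrices $M_1,\ldots,M_n$ such that the monic spectrahedron $\{a\in\R^n\mid I+a_1M_1+\cdots+a_nM_n\succeq 0\}$ coincides with the hyperbolicity region $\sH_e(p)$. After translation we may assume $e=0$ and $p(0)=1$. The proposition above (its proof run in reverse) shows it is enough to exhibit a \emph{symmetric determinantal representation} $\det(I+x_1M_1+\cdots+x_nM_n)=q(x)$ of some hyperbolic polynomial $q$ whose hyperbolicity region at the origin equals $\sH_e(p)$. Crucially, $q$ need not be $p$ itself: it is enough that $q=p^k\cdot h$ where $h$ is a hyperbolic ``multiplier'' with $\sH_e(h)\supseteq \sH_e(p)$, since the innermost zero-component then comes from $p$.

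First I would dispose of the planar case by invoking the Helton--Vinnikov theorem: after homogenizing $p$ in one extra variable and restricting to any two-dimensional plane through $e$, the resulting ternary hyperbolic polynomial admits a symmetric determinantal representation. This provides the conjecture for $n=2$ unconditionally, and gives, for higher $n$, a rich supply of \emph{local} determinantal data along every affine $2$-plane through the origin. The program is then to glue these local representations into global symmetric matrices $M_1,\ldots,M_n$.

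The main obstacle is precisely that this gluing cannot be carried out naively. Br\"anden's counterexamples show that the \emph{algebraic} Lax conjecture fails for $n\geqslant 3$: there exist hyperbolic polynomials admitting no symmetric determinantal representation at all. Thus the multiplier $h$ is indispensable, and the real question becomes how to construct it. The natural strategy is to search for $h$ inside the cone of polynomials \emph{interlacing} $p$ with respect to $e$: such interlacers have hyperbolicity region containing $\sH_e(p)$ and form a convex cone that can be parametrized explicitly via the B\"urgisser--Kummer--Plaumann--Vinzant theory of hyperbolic multipliers. One then reformulates the existence of the required $q=p^k h$ as the solvability of a (highly non-trivial) system of symmetric matrix equations coming from comparing coefficients in $\det(I+\sum x_iM_i)=p^k h$.

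The step I expect to be the true obstacle, and the reason this is stated as a conjecture rather than a theorem, is producing the multiplier $h$ together with the global symmetric matrices in one stroke: the local $2$-plane data from Helton--Vinnikov are determined only up to orthogonal conjugation, and no currently known compatibility argument forces these choices to align into a single symmetric pencil over all of $\R^n$. My proposal would therefore be to attack this compatibility via a deformation along the boundary of the hyperbolicity region, using the fact that $\partial\sH_e(p)$ is a real algebraic hypersurface with only exposed faces, and to try to rigidify the local determinantal representations by tracking the kernel bundle of $p^k h$ along $\partial\sH_e(p)$. Absent a breakthrough in controlling this bundle globally, one should expect only partial results: the conjecture in the cases $n=2$, for quadratic $p$, and for hyperbolicity regions arising from known classes of stable polynomials.
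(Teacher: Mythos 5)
You have correctly recognized that this statement is labelled a \emph{conjecture} in the paper, and the paper itself provides no proof: after stating it, the survey says explicitly that ``in full generality, the conjecture is open'' and only records partial results, principally Helton--Vinnikov in $\R^2$, Br\"and\'en's and Netzer--Thom's results on special classes, and Kummer's theorem on determinantal representations of multiples $qp$ (with no control on the extra factor $q$). There is therefore no proof in the paper to compare against, and your text is honest in not pretending to supply one; what you have written is a research program, not a proof.

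As a survey of the state of the art your sketch is essentially accurate: the planar case follows from Helton--Vinnikov; the ``algebraic'' version of the Lax conjecture (that $p$ itself be a symmetric determinant) fails for $n\geqslant 3$, so any proof must pass through a hyperbolic multiplier $h$ with $\sH_e(h)\supseteq\sH_e(p)$; and the cone of interlacers of $p$ is the natural place to look for such an $h$. You also correctly identify the genuine obstruction: the local two-plane determinantal data produced by Helton--Vinnikov are only defined up to orthogonal conjugation, and no known argument makes these choices cohere into a single symmetric pencil over all of $\R^n$. This is exactly why the statement remains a conjecture. One small caution: Kummer's theorem, quoted in the paper, already gives $qp=\det(I+\sum x_iM_i)$ for some polynomial $q$, so the bottleneck is not the existence of \emph{some} determinantal multiple but rather the existence of a multiple whose extra factor does not shrink the hyperbolicity region --- which is precisely the multiplier problem you describe. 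In short, you have not closed the gap (nor could you be expected to), but you have correctly located it.
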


\noindent
In full generality, the conjecture is open. There are different approaches and partial positive results (for example \cite{br, cliff}), but most importantly, a solution in dimension two. For simplicity, we will assume from now on that $e$ is the origin and $p(e)=1$.

\begin{Thm}[Helton \& Vinnikov \cite{hevi}] The Geometric Lax Conjecture holds true in $\R^2.$ Even stronger, ever hyperbolic polynomial $p\in\R[x_1,x_2]$ has a monic determinantal representation $$p=\det\left(I+x_1M_1+\cdots +x_nM_n \right)$$ with symmetric matrices $M_i$.
\end{Thm}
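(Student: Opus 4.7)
The plan is to homogenize and translate the problem into the language of real algebraic curves and their Jacobians. Write $P(x_0,x_1,x_2)$ for the degree $d$ homogenization of $p$, normalized by $P(1,0,0)=1$. Hyperbolicity of $p$ with respect to the origin is equivalent to $P$ being a hyperbolic form with respect to $e=[1:0:0]$: every real line through $e$ meets the complex projective curve $C=V(P)\subset\P^2_\C$ in only real points. After a small generic perturbation preserving hyperbolicity (and using multiplicativity of the determinant to handle reducible cases), we may further assume $C$ is smooth and irreducible.

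Next I would analyze the real topology of $C$. Bezout-style counting from the hyperbolicity condition forces $C(\R)$ to consist of $\lfloor d/2 \rfloor$ nested ovals around $e$ (plus a pseudo-line when $d$ is odd), realizing the Harnack maximum so that $C$ is a \emph{dividing} (M-)curve of genus $g=\binom{d-1}{2}$: the complement $C\setminus C(\R)$ splits into two conjugate components. This topological rigidity is what distinguishes hyperbolic forms from arbitrary real plane curves and is the geometric source of the existence of a symmetric representation.

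The classical bridge is this: monic symmetric determinantal representations $P=\det(I+x_1M_1+x_2M_2)$ correspond bijectively to isomorphism classes of real line bundles $\mathcal L$ on $C$ of degree $g-1$ (real theta characteristics) satisfying $\mathcal L\otimes\mathcal L\cong K_C$, together with the vanishing condition $H^0(C,\mathcal L)=0$ and a positivity condition realized by a real section pairing that is definite on each real oval. Under this dictionary, the columns of the cokernel sheaf of the linear pencil encode $\mathcal L$, and symmetry of the $M_i$ translates into a self-duality of $\mathcal L$ compatible with the real structure. I would then produce the required $\mathcal L$ analytically, using the Riemann theta function on the Jacobian $J(C)$: on a dividing curve, one can choose a symplectic homology basis so that the period matrix is purely imaginary on the relevant part, and a suitable half-period yields a theta characteristic whose associated vanishing theta function has no real zeros --- this is precisely the \emph{definite} characteristic sought. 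Finally, one checks that the matrices $M_1,M_2$ extracted from the sections of $\mathcal L$ are real symmetric (not merely Hermitian), which uses that $C$ is of dividing type so that the real structure acts trivially on the chosen theta characteristic.

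The main obstacle is the analytic construction of the definite real theta characteristic. Everything before it is formal bookkeeping about hyperbolicity and linear algebra, but producing a real line bundle with $H^0=0$ and the correct positivity on every oval requires the full force of the theta-function machinery on the Prym variety of $C\to C/\text{conj}$, and this is where Helton and Vinnikov build on the work of Dixon and Vinnikov's earlier classification of determinantal representations. Once this existence is in hand, the passage back to matrices is routine.
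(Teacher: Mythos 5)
The paper itself offers no proof of this theorem: it is a survey and simply cites Helton and Vinnikov, remarking only that the argument ``employs hard algebraic geometry.'' So there is no in-paper proof to compare against; what follows assesses your sketch against the actual Helton--Vinnikov argument, which rests on Vinnikov's earlier classification of self-adjoint determinantal representations of real plane curves.

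Your outline does capture the right architecture: homogenize, analyze the nested-oval topology of the hyperbolic curve, invoke the Dixon--Vinnikov correspondence between symmetric determinantal representations and ineffective theta characteristics, and then produce a \emph{definite} real theta characteristic via theta-function analysis on the dividing curve. Two steps need repair, however. First, your justification that $C$ is of dividing type is wrong: a smooth hyperbolic plane curve of degree $d$ has only $\lfloor d/2 \rfloor$ nested ovals (plus a pseudo-line when $d$ is odd), whereas the Harnack bound for an $M$-curve is $g+1=\binom{d-1}{2}+1$; these agree only for $d\leqslant 3$, so hyperbolic curves are emphatically \emph{not} $M$-curves once $d\geqslant 4$. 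They are nevertheless of dividing type, but that needs its own argument: projection from $e$ exhibits $C(\C)$ as a degree-$d$ cover of $\P^1(\C)$, and hyperbolicity forces the full preimage of $\P^1(\R)$ to be exactly $C(\R)$, so the preimages of the two open half-planes disconnect $C(\C)\setminus C(\R)$. Second, the reduction ``perturb to a smooth irreducible curve'' is not a free move: one must show that as a singular hyperbolic polynomial is approximated by smooth hyperbolic ones, the matrices in the resulting representations stay bounded so that a limit exists, and this is not automatic (multiplicativity of the determinant does dispose of reducibility cleanly once irreducible factors are handled, but the singular irreducible case is where Helton and Vinnikov lean on Vinnikov's direct treatment rather than a limiting argument). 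With these caveats the sketch mirrors the known strategy at a high level, but the genuine mathematical content --- existence of a definite real theta characteristic with $H^0=0$ --- remains, as you concede, a black box.
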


\noindent
This is a deep mathematical result, and the proof employs hard algebraic geometry. There are now some easier and also algorithmic proofs of slightly weaker statements (see \cite{pl1,pl2}). 

\noindent
Concerning determinantal representations, let us mention two more results.

\begin{Thm}[Kummer \cite{kummer}]
For every hyperbolic polynomial $p\in \R[x_1,\ldots, x_n]$ there is a determinantal representation of some multiple $$qp=\det(I+x_1M_1+\cdots+x_nM_n).$$
\end{Thm}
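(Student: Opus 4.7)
The approach will be to construct an auxiliary symmetric matrix polynomial whose determinant is a nontrivial power of $p$, and then linearize it to a linear pencil at the cost of multiplication by an extra factor. The strategy proceeds in three stages: produce a symmetric matrix polynomial whose determinant is proportional to $p^k$, normalize it to equal the identity at $e$, and finally convert it into a linear matrix pencil.

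First I would normalize by assuming $e = 0$ and $p(0) = 1$, and set $d = \deg p$. Since $p$ is hyperbolic with respect to $e$, the directional derivative $q = D_e p$ is again hyperbolic, and its univariate restrictions $q_v(t) = q(tv)$ strictly interlace those of $p_v$ (a classical consequence of Rolle's theorem applied on each line through $0$). Using this interlacer I would build the multivariate Hermite matrix $B(x)$: a symmetric matrix with polynomial entries, defined so that its restriction to each line through $0$ reproduces the classical Hermite form of the univariate pair $(p_v, q_v)$. Two properties are then crucial: $B(0)$ is positive definite, because on every line through $0$ the roots of $q_v$ strictly separate those of $p_v$; and $\det B(x)$ is a nonzero scalar multiple of a fixed power $p(x)^k$, because this identity already holds along every line through the origin and both sides are polynomials, hence the equality propagates globally.

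Next I would conjugate by the inverse of the Cholesky factor of $B(0)$ to produce a symmetric matrix polynomial $\wt B(x)$ with $\wt B(0) = I$ and $\det \wt B(x) = c \cdot p(x)^k$ for some $c \neq 0$. To replace the nonlinear entries of $\wt B$ by linear ones, I would apply a symmetric linearization for matrix polynomials: any symmetric matrix polynomial $M(x)$ with $M(0) = I$ admits an enlargement to a symmetric linear matrix polynomial $L(x) = I + x_1 M_1 + \cdots + x_n M_n$ on a larger space, with $\det L(x) = h(x) \cdot \det M(x)$ for some auxiliary polynomial $h$. Applied to $\wt B$, this yields $\det L(x) = c \cdot h(x) \cdot p(x)^k$, and setting $q(x) = c^{-1} h(x) \cdot p(x)^{k-1}$ gives the desired identity $qp = \det L$.

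The main obstacle is the symmetric linearization step. Standard companion-matrix linearizations break symmetry, and enforcing both symmetry and the monic form $L(0) = I$ on the enlarged pencil simultaneously is the technical heart of the argument; this is precisely where the positive definiteness of $B(0)$ and the interlacing property of $q$ are essential, as they provide the normalization needed to perform a symmetric enlargement with controlled auxiliary factor $h$. Once this is achieved, the remaining ingredients — constructing the interlacer, writing down the Hermite matrix, and identifying its determinant as a power of $p$ — are classical after their univariate analogues are slotted into the hyperbolic setting line by line.
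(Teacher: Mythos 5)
Your overall instinct — that some Bézoutian-type matrix built from an interlacer is the right object — is indeed aligned with Kummer's actual argument, which is built around Bézout matrices of a hyperbolic polynomial and its Renegar derivative. But two of the load-bearing claims in your sketch do not hold, and they are not minor.

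First, the determinant identity $\det B(x)=c\,p(x)^{k}$ is false for the Hermite/Bézout matrix. Restricting to a line and fixing the univariate pair $(p_v,q_v)$, the Hermite (Bézout) form of that pair is a \emph{constant} matrix whose determinant is the resultant $\mathrm{Res}(p_v,q_v)$; when $q_v$ is the derivative of $p_v$ this is the discriminant of $p_v$, not $p_v^k$. The same persists for the multivariate Hermite matrix taken in the direction $e$: its determinant is the discriminant of $p$ with respect to that direction, a genuinely different polynomial. So the step "this identity already holds along every line and therefore propagates" starts from an identity that does not hold along lines. Already in the simplest example $p(t)=t^2-1$, $q=p'=2t$, the Bézout matrix is $2I_2$ with determinant $4=\mathrm{disc}(p)$, which is no power of $p$.

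Second, the "symmetric linearization" lemma you invoke — that any symmetric matrix polynomial $M$ with $M(0)=I$ admits a symmetric \emph{monic linear} enlargement $L$ with $\det L = h\cdot\det M$ — is not a standard black box, and as stated it is false. Take $M$ to be the $1\times 1$ matrix $(1-x_1^4-x_2^4)$. Any monic symmetric linear pencil $L=I+\sum x_iM_i$ has a hyperbolic determinant, so $\det L = h\cdot(1-x_1^4-x_2^4)$ would force $1-x_1^4-x_2^4$ to be hyperbolic as well, which it is not (it is the TV-screen example in this very survey). More generally, for a $1\times 1$ input this lemma is precisely Kummer's theorem, so invoking it is circular. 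The positivity of $B(0)$ and the interlacing you mention are necessary but nowhere near sufficient to deliver such a linearization; producing the linear pencil with controlled extraneous factor is the actual theorem, and your sketch defers it to an unproved step. The survey does not reproduce Kummer's proof, but the mechanism there is not a generic linearization of a symmetric matrix polynomial; it exploits the specific algebraic relations between the Bézout matrix, the companion-type multiplication operator in the $e$-direction, and the resulting intertwining identities, which is where the extra factor $q$ arises as a discriminant-type polynomial rather than a power of $p$.
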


\noindent
Unfortunately, there is no control over the factor $q$ in the representation. So the hyperbolicity region of $qp$ might be strictly smaller than the one of $p$. The next result is a statement about rational representations (with no obvious consequences for the Geometric Lax Conjecture).

\begin{Thm}[Netzer, Plaumann \& Thom \cite{neplth}] For every hyperbolic polynomial $p\in \R[x_1,\ldots, x_n]$ there is a symmetric matrix $M$ of homogeneous, rational, degree one functions, with $p=\det(I+M).$
\end{Thm}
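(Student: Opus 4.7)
My plan is to leverage Kummer's theorem (the immediately preceding result), which gives a polynomial determinantal representation of a multiple $qp$, and then to eliminate the spurious factor $q$ by a Schur-complement reduction that trades polynomial entries for rational entries of the correct homogeneity.

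First, apply Kummer's theorem to obtain a polynomial $q \in \R[x_1,\ldots,x_n]$ and symmetric matrices $N_1,\ldots,N_n$ with
\[
qp \;=\; \det\bigl(I + x_1N_1 + \cdots + x_nN_n\bigr) \;=:\; \det L(x).
\]
Since $p(0)=1$ and $L(0)=I$, we may rescale so that $q(0)=1$. It is convenient to homogenize by introducing a variable $x_0$ and setting $L_h(x_0,x) = x_0 I + \sum_i x_i N_i$; its entries are linear forms, and $\det L_h = x_0^{N-\deg(qp)}\,(qp)_h$, where $(qp)_h$ denotes the homogenization. Now decompose $L_h$ into principal blocks
\[
L_h \;=\; \begin{pmatrix} A & B \\ B^T & C \end{pmatrix},
\]
with $A$ of size $k\times k$ such that $A(e_0)$ is the identity (so $A$ is invertible over $\R(x_0,x_1,\ldots,x_n)$). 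The Schur complement $M_h := C - B^T A^{-1} B$ is symmetric; each entry is a rational function homogeneous of degree one in $(x_0,x_1,\ldots,x_n)$, because entries of $L_h$ are linear forms (degree one) while entries of $A^{-1}$ are homogeneous rational functions of degree $-1$, so the product $B^T A^{-1} B$ has degree $1 + (-1) + 1 = 1$. By the Schur-complement identity, $\det L_h = \det A \cdot \det M_h$. If the block $A$ can be chosen so that $\det A = x_0^{\ell}\,q_h$ for some $\ell \geq 0$, then $\det M_h$ becomes (a suitable power of $x_0$ times) $p_h$, and passing to the affine chart $x_0=1$ yields a symmetric matrix $M$ with $p = \det(I+M)$ whose entries are rational and homogeneous of degree one in $(x_1,\ldots,x_n)$.

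The main obstacle, and the heart of the proof, is arranging that a principal block of the Kummer matrix has determinant precisely $q_h$ (up to a power of $x_0$). Kummer's theorem provides no such control. To enforce this, I would try to enlarge the Kummer representation by a direct sum with an auxiliary symmetric linear pencil of known determinant, and then perform a simultaneous symmetric (rational) congruence to bring $L_h$ into a block form where the $q$-factor is isolated in one diagonal block; at a generic point where $p$ and $q$ are coprime this block decomposition is available, and the task is to extend it rationally to all of $\R^n$. A more geometric route is to work on the normalization of the hypersurface $\{p_h = 0\}$: the freedom afforded by rational coefficients (absent in the polynomial setting) removes the theta-characteristic obstruction that precludes polynomial determinantal representations of $p$ when $n\geq 3$. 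Making this rational extraction globally defined while preserving symmetry and the degree-one homogeneity of the entries is where I expect the bulk of the technical work, and the deeper geometric input, to lie.
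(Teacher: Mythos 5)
Your route is genuinely different from the paper's, and it has a real gap at exactly the place you flag. Kummer's theorem hands you $qp=\det L(x)$ with no control whatsoever over the extraneous factor $q$, and in particular no control over how $q$ ``sits inside'' the pencil. Your Schur-complement reduction needs a principal block $A$ of $L_h$ with $\det A = x_0^{\ell}\,q_h$, and nothing forces such a block to exist: a symmetric determinantal representation of a product $qp$ need not decompose into anything resembling compatible representations of the two factors, which is precisely why removing spurious factors in this subject is hard. The repair you sketch --- direct-summing an auxiliary pencil, performing a rational congruence, and ``extending a generic block decomposition rationally to all of $\R^n$'' --- is not a construction: a rational object defined on a dense open set is already rational, so the difficulty is the existence of the congruence isolating $q$, not its extension, and you give no mechanism to produce it. (There is also a chronological problem: the Netzer--Plaumann--Thom result predates Kummer's, so it cannot logically be a corollary of it; the survey even emphasizes that Kummer's theorem offers no control over $q$.)

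The paper's proof avoids multiples altogether and works directly with the Hermite matrix, as its title indicates. Homogenize $p$ to $p_h\in\R[x_0,x_1,\ldots,x_n]$, monic of degree $d$ in $x_0$, and regard $\R(x_1,\ldots,x_n)[x_0]/(p_h)$ as a $d$-dimensional algebra over $\R(x_1,\ldots,x_n)$. Multiplication by $x_0$ has companion matrix $C$ with $\det(x_0 I - C)=p_h$, and the Hermite matrix $H$ (the Gram matrix of the trace form) is symmetric with polynomial entries and satisfies $HC = C^{T}H$. Hyperbolicity of $p$ is equivalent to $H$ being positive semidefinite as a matrix-valued function on $\R^n$, and a matrix version of Hilbert's 17th problem (positive semidefinite symmetric matrix polynomials factor as $B^{T}B$ over the rational function field) produces an invertible $B$ with rational entries and $H=B^{T}B$. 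Then $M:=BCB^{-1}$ is symmetric, $\det(x_0 I - M)=p_h$, and with the natural grading on the companion basis its entries are rational and homogeneous of degree one; dehomogenizing gives $p=\det(I+M)$ after a sign normalization. The place where hyperbolicity actually does work is the rational factorization of the PSD Hermite matrix --- a step your proposal never reaches.
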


Now passing from spectrahedra to their linear images increases the class of sets a lot.

\begin{Def}
The linear image of a spectrahedron is called a {\it spectrahedral shadow}.
\end{Def}

\noindent
The class of spectrahedral shadows is closed under any reasonable operation on convex sets. This includes duals, closures, interiors, products, sums and convex hulls of unions  (see for example \cite{habil}). 
 Spectrahedral shadows are convex and semialgebraic (by the Projection Theorem), but {\it no other} necessary condition is known:
 
 \begin{Conjecture}[Helton-Nie Conjecture] Every convex semialgebraic set is a spectrahedral shadow.
 \end{Conjecture}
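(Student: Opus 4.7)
The plan is to combine the rich closure properties of the class of spectrahedral shadows with the sum-of-squares machinery from Section 2. First, since spectrahedral shadows are closed under closure, interior, products, sums, duals, and convex hulls of unions, one may reduce to the case where the convex semialgebraic set $C\subseteq\R^n$ is compact, full-dimensional, and contains the origin in its interior, together with a separate treatment of the recession cone. One should also reduce the semialgebraic hypothesis to the \emph{basic closed} one: writing a convex semialgebraic set as a convex combination (or intersection) of basic closed pieces compatible with convexity is already a nontrivial task, but it may be handled by a careful stratification of the boundary.

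Assume then that $C=\mcal W(p_1,\ldots,p_r)$ is basic closed, compact, with the origin in its interior. The strategy is to realize the polar $C^\circ$ as a spectrahedral shadow. By Schm\"udgen's Theorem, every linear polynomial $\ell$ strictly positive on $C$ admits a representation $\ell=\sum_{e\in\{0,1\}^r}\sigma_e p_1^{e_1}\cdots p_r^{e_r}$ with the $\sigma_e$ sums of squares. For a fixed degree bound $2d$, the set of tuples $(\ell,(\sigma_e)_e)$ arising in this way is, by Lemma \ref{sos}, an affine section of a product of positive semidefinite cones in Gram-matrix coordinates, hence a spectrahedron. Projecting onto the coefficient vector of $\ell$ gives an outer spectrahedral-shadow approximation $C_d^\circ$ of $C^\circ$. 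If some finite $d$ satisfied $C_d^\circ=C^\circ$, dualizing (which preserves spectrahedral shadows) would realize $C$ itself as a spectrahedral shadow.

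The decisive obstacle is precisely this stabilization step: it demands a uniform degree bound in Schm\"udgen representations of \emph{all} linear polynomials nonnegative on $C$, and the known proofs of Schm\"udgen's Theorem give only asymptotic information, with no uniform control as $\ell$ varies over the relevant family. An alternative route, closer to the original Helton--Nie program, is to work locally: cover the boundary of $C$ by patches on which each active defining polynomial is, up to a positive multiplier, sos-concave; exhibit each patch as a spectrahedral shadow using the Hessian structure at boundary points; and then glue via closure under convex hulls and intersections. The obstruction here is removing the sos-concavity hypothesis, since a convex region may be cut out by polynomials with wildly non-convex behavior, and in dimensions $n\geqslant 3$ the boundary can carry singular geometry that genuinely blocks the patching. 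For this reason I would expect a full resolution of the conjecture to require either a new effective form of the Nichtnegativstellensatz with uniform degree control, or a construction of linear matrix inequalities that does not factor through sum-of-squares certificates at all.
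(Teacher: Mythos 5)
The statement you were asked to address is presented in the paper as an open conjecture, not a theorem, so there is no proof of it in the paper for you to match---and indeed your write-up, read carefully, does not claim to close the argument. What you have written is an accurate reconstruction of the two main attacks that the paper itself surveys: the polar-dual route (Lasserre's theorem, requiring a uniform degree bound in Schm\"udgen representations of all affine-linear polynomials nonnegative on $C$) and the sos-concavity route of Helton and Nie. You also put your finger on exactly the right obstacle in each case: in the Lasserre approach, nothing in Schm\"udgen's Theorem forces the outer approximations $C_d^\circ$ to stabilize at a finite $d$; in the Helton--Nie approach, sos-concavity of the Hessians is a genuine hypothesis that a general convex semialgebraic set need not satisfy, and there is no known way to remove it. As a survey of the known sufficient conditions and their limits, this is correct and well aligned with the paper's own discussion.

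However, your closing speculation---that a full proof would require some fundamentally new effective positivity result or a non-sos construction of linear matrix inequalities---should be replaced by something stronger: the Helton--Nie Conjecture is in fact \emph{false}. Scheiderer subsequently showed (``Spectrahedral shadows'', SIAM J.\ Appl.\ Algebra Geom., 2018, postdating this survey) that there exist convex, even basic closed, semialgebraic sets that are not spectrahedral shadows; for instance, the cone of nonnegative forms of fixed degree in enough variables, viewed as a convex cone in coefficient space, fails to be a spectrahedral shadow outside the classical Hilbert cases where it coincides with the sum-of-squares cone. So the reductions in your first paragraph cannot rescue the statement---the conjecture simply fails, and the obstacles you identify are not mere gaps in technique but symptoms of a genuine dichotomy. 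Scheiderer's positive result in dimension two, cited in the paper, is essentially the end of the story on the affirmative side; in higher dimensions the right question is to characterize which convex semialgebraic sets are spectrahedral shadows, and the Lasserre/Schm\"udgen mechanism you describe remains the principal source of sufficient conditions.
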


\noindent
If this was true, it would allow to apply semidefinite programming on any convex semialgebraic set. One can pull back the problem from the linear image of a spectrahedron to the spectrahedron itself.
There are many results in support of the Helton-Nie Conjecture. The basic construction of  spectrahedral shadows is the following, building a bridge to results of real algebra, in particular Positivstellens\"atze:

\begin{Thm}[Lasserre \cite{las}]
Let   $p_1,\ldots, p_r\in\R[x_1,\ldots, x_n]$ and set $$W=\mathcal W(p_1,\ldots, p_r)\subseteq \R^n.$$ Assume there exists some $d\in\N$, such that whenever a polynomial $\ell$ of degree $\leqslant 1$ fulfills $\ell\geqslant 0$ on $W$, then $$\ell=\sum_e \sigma_e p_1^{e_1}\cdots p_r^{e_r}\in\mathcal P(p_1,\ldots, p_r)$$ with sums of squares $\sigma_e$ of degree $\leqslant 2d$. Then the closed convex hull $$\overline{{\rm conv}(W)}$$ is a spectrahedral shadow.
\end{Thm}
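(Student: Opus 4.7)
The plan is to realize $\overline{{\rm conv}(W)}$ as an affine section of the dual cone of a suitable spectrahedral shadow. The two ingredients are (a) a spectrahedral shadow description of the cone $C$ of affine-linear polynomials nonnegative on $W$, obtained from the hypothesis together with Lemma \ref{sos}; and (b) the closure of the class of spectrahedral shadows under duals and under affine preimages, both of which are listed earlier in the paper.

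First, I would identify $\R[x_1,\ldots,x_n]_{\leqslant 1}$ with $\R^{n+1}$ via $c_0 + c_1 x_1 + \cdots + c_n x_n \leftrightarrow (c_0, c_1, \ldots, c_n)$, and let $C\subseteq \R^{n+1}$ be the closed convex cone of those $(c_0, c)$ with $c_0 + c\cdot x\geqslant 0$ on $W$. Hahn--Banach separation gives
$$\overline{{\rm conv}(W)} = \{a \in \R^n \mid (1,a) \in C^*\},$$
since any point not in $\overline{{\rm conv}(W)}$ is separated from it by an $\ell \in C$ with $\ell(a) < 0$, and the reverse inclusion is immediate from $W \subseteq \{a : \ell(a)\geqslant 0\}$ for each $\ell \in C$.

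Second, I would show that $C$ itself is a spectrahedral shadow. The hypothesis (together with the trivial reverse inclusion) says that $(c_0,c)\in C$ if and only if there exist sums of squares $\sigma_e$ of degree $\leqslant 2d$ with $\sum_e \sigma_e\, p_1^{e_1}\cdots p_r^{e_r} = c_0 + c_1 x_1 + \cdots + c_n x_n$. By Lemma \ref{sos} this is the same as requiring the existence of positive semidefinite matrices $M_e$ of size $N$ with $\sum_e \mathfrak m_d M_e \mathfrak m_d^t \cdot p_1^{e_1}\cdots p_r^{e_r} = c_0 + c_1 x_1 + \cdots + c_n x_n$. Comparing coefficients monomial by monomial yields linear equations in the entries of the $M_e$ and in $c_0,\ldots,c_n$ (with the coefficients of all monomials of degree $\geqslant 2$ constrained to vanish). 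The feasible tuples $((M_e),(c_i))$ therefore form a spectrahedron (view $\bigoplus_e M_e$ as a single block-diagonal PSD matrix), and projecting onto the $c$-coordinates exhibits $C$ as a spectrahedral shadow --- exactly parallel to the construction already carried out in the proof of Lasserre's relaxation theorem.

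Finally, invoking closedness of the class of spectrahedral shadows under polar duality gives that $C^*$ is a spectrahedral shadow, and intersecting with the affine hyperplane $\{a_0 = 1\}$ (an affine preimage) finishes the proof. The main obstacle is not the coefficient-matching bookkeeping of the middle step --- that is routine and mirrors part (i) of Lasserre's theorem --- but rather the appeal to closure under duality, which is a genuine theorem about spectrahedral shadows that the paper takes as a black box. One could sidestep this appeal by constructing, in a single shot, the moment-relaxation spectrahedron (in the variables $y_\alpha$) whose projection onto $(y_{e_1},\ldots,y_{e_n})$ yields $\overline{{\rm conv}(W)}$; the two routes are essentially Lagrangian duals of one another.
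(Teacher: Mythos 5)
Your proof is correct and follows essentially the same route as the paper's: realize the cone of affine-linear polynomials nonnegative on $W$ (the paper's polar dual $W^\circ$, your cone $C$) as a spectrahedral shadow via Gram matrices and the degree-bounded preordering hypothesis, then invoke closure of spectrahedral shadows under duality and recover $\overline{{\rm conv}(W)}$ as the bidual. Your write-up is a bit more careful about the conic identification in $\R^{n+1}$ and the slice at $a_0=1$, which the paper's sketch glosses over by speaking directly of the ``double dual of $W$,'' but there is no substantive difference in method.
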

\begin{proof}[Sketch of proof] The polar dual $W^\circ= \left\{ \ell=\ell_0+\ell_1x_1+\cdots +\ell_nx_n\mid \ell_i\in\R,  \ell \geqslant 0 \mbox{ on } W\right\}$ is a spectrahedral shadow. This can be seen via Gram matrices as above, since every $\ell\in W^\circ$ admits a preordering representation with degree bounds. The dual of a spectrahedral shadow is again a spectrahedral shadow, and thus so is the double dual of $W,$ which coincides with $\overline{{\rm conv}(W)}$.
\end{proof}

\noindent
So if preordering representations of nonnegative linear polynomials (with degree bounds) can be proven for a set, its closed convex hull is a spectrahedral shadow. Helton and Nie prove this for a large class of sets \cite{heni,heni2}. For example: 

\begin{Thm}[Helton \& Nie]
Assume $p_1,\ldots, p_r\in \R[x_1,\ldots, x_n]$ are such that $$W=\mathcal W(p_1,\ldots,p_r)\subseteq\R^n$$ is convex and bounded. Further assume the negative Hessian matrices $-\mathbf H(p_1), \ldots, -\mathbf H(p_r)$ are all sums of Hermitian squares in the matrix ring ${\rm Mat}_n\left(\R[x_1,\ldots, x_n]\right)$. Then $W$ is a spectrahedral shadow.
\end{Thm}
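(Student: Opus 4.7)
The plan is to verify the hypothesis of Lasserre's theorem (stated just above): it suffices to exhibit a degree $d$ such that every linear polynomial $\ell$ with $\ell\geqslant 0$ on $W$ admits a representation in $\mathcal P(p_1,\ldots,p_r)$ whose sum-of-squares coefficients all have degree $\leqslant 2d$. The matrix sos hypothesis on $-\mathbf H(p_i)$ immediately implies, via Taylor's formula, that each $p_i$ is concave, so standard convex analysis can be applied at the point where $\ell$ attains its minimum on $W$, and the Hessian decomposition itself will provide the sum-of-squares certificate.

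After replacing $\ell$ by $\ell-\ell^*$ where $\ell^*=\min_W\ell\geqslant 0$ is trivially a square, I may assume $\ell(a)=0$ for some $a\in W$. Assuming Slater's condition (which I would justify by passing to the affine hull of $W$ if necessary), the KKT conditions at $a$ produce multipliers $\lambda_i\geqslant 0$ with $\nabla\ell(a)=\sum_i\lambda_i\nabla p_i(a)$ and $\lambda_ip_i(a)=0$. Using $\ell(x)=\nabla\ell(a)^t(x-a)$ and complementary slackness, the direct computation
$$\ell(x)-\sum_i\lambda_ip_i(x)=-\sum_i\lambda_i\bigl[p_i(x)-p_i(a)-\nabla p_i(a)^t(x-a)\bigr]=\sum_i\lambda_i\int_0^1(1-t)(x-a)^t\bigl(-\mathbf H(p_i)(a+t(x-a))\bigr)(x-a)\,dt$$
presents the difference as a weighted integral of the positive semidefinite matrix polynomial $-\mathbf H(p_i)$ along the segment from $a$ to $x$.

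The main obstacle --- and the step that genuinely uses the matrix sos hypothesis --- is to show that this integral is a sum of squares in $\R[x_1,\ldots,x_n]$ of degree bounded independently of $a$. Writing $-\mathbf H(p_i)(y)=V_i(y)^tV_i(y)$ for some matrix polynomial $V_i$ (stack the Hermitian-square factors of $-\mathbf H(p_i)$), I expand the vector $V_i(a+t(x-a))(x-a)=\sum_{j}t^jc_{i,j}(x)$ as a polynomial in $t$ with polynomial coefficients $c_{i,j}(x)$. The integral then collapses to
$$\sum_{j,k}\beta_{jk}\,c_{i,j}(x)^tc_{i,k}(x),\qquad \beta_{jk}=\int_0^1(1-t)t^{j+k}\,dt=\frac{1}{(j+k+1)(j+k+2)}.$$
The matrix $B=(\beta_{jk})$ is the Gram matrix of the monomials $1,t,t^2,\ldots$ with respect to the positive definite inner product $\langle f,g\rangle=\int_0^1(1-t)f(t)g(t)\,dt$, hence $B=L^tL$ for some real matrix $L$, and the expression above equals $\|L(c_{i,0}(x),c_{i,1}(x),\ldots)^t\|^2$, manifestly a sum of squares. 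Its degree is controlled by $\deg V_i$ and $\deg p_i$, uniformly in $a$ and $\ell$.

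Assembling the pieces, $\ell=\sigma+\sum_i\lambda_ip_i+\ell^*$ where $\sigma$ is a sum of squares of degree bounded by a fixed $2d$ and each $\lambda_i,\ell^*\geqslant 0$ is a trivial square. This is the uniformly bounded-degree preordering representation required by Lasserre's theorem, so the closed convex set $W=\overline{\mathrm{conv}(W)}$ is a spectrahedral shadow.
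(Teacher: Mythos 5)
Your proof is correct and reproduces the Helton--Nie argument faithfully (the survey cites the theorem without proof): Lasserre's criterion, KKT multipliers at a minimizer of $\ell$ on $W$, the integral form of the second-order Taylor remainder, and the sos decomposition $-\mathbf H(p_i)=V_i^tV_i$ combined with positive-definiteness of the moment matrix $\bigl(\tfrac{1}{(j+k+1)(j+k+2)}\bigr)_{j,k}$ are exactly the steps of \cite{heni}, and your Gram-matrix computation is precisely their ``sos-concave $\Rightarrow$ Taylor remainder is sos'' lemma. The one point worth tightening is the constraint qualification: Slater's condition is in fact automatic once $W$ has nonempty interior, because a concave $p_i\not\equiv 0$ cannot vanish at an interior point of $\{p_i\geqslant 0\}$ (the supporting-line inequality $p_i(x)\leqslant p_i(x_0)+\nabla p_i(x_0)^t(x-x_0)$ would force $p_i\leqslant 0$ everywhere, hence $p_i\equiv 0$ on the open interior of $W$, hence $p_i\equiv 0$), so after discarding identically-zero constraints every interior point of $W$ is a Slater point; in the degenerate case one passes to the affine hull of $W$ --- a case the original Helton--Nie theorem sidesteps by assuming nonempty interior, whereas the survey's statement does not --- and there you should add the easy observations that sos-concavity is preserved under affine substitution ($-\nabla^2\,p_i(Ay+b)=A^t\bigl(V_i(Ay+b)\bigr)^t\bigl(V_i(Ay+b)\bigr)A$) and that spectrahedral shadows are closed under affine embeddings.
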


\begin{Example}
Consider the TV-screen $W=\mathcal W(1-x_1^4-x_2^4)\subseteq \R^2,$ which is not a spectrahedron. Compute $$-\mathbf H(1-x_1^4-x_2^4)= \left(\begin{array}{cc}12x_1^2 & 0 \\0 & 12x_2^2\end{array}\right)=\left(\begin{array}{cc}\sqrt{12}x_1 & 0 \\0 & \sqrt{12}x_2\end{array}\right)^t\cdot \left(\begin{array}{cc}\sqrt{12}x_1 & 0 \\0 & \sqrt{12}x_2\end{array}\right).$$ Thus the TV-screen is a spectrahedral shadow.
\end{Example}

\noindent
Recently, Scheiderer has settled the Helton-Nie Conjecture in dimension $2$, building upon his deep results about sums of squares on algebraic curves:

\begin{Thm}[Scheiderer \cite{sch2}]
Every convex semialgebraic set in $\R^2$ is a spectrahedral shadow.
\end{Thm}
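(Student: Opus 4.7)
The plan is to apply the preceding Lasserre theorem, reducing the problem to establishing a uniform degree bound for preordering representations of nonnegative linear polynomials. First I would reduce to the basic closed case: writing the Boolean formula defining a convex semialgebraic set $W \subseteq \R^2$ in disjunctive normal form expresses $W$ as a finite union of basic semialgebraic pieces, and since the class of spectrahedral shadows is stable under closures and closed convex hulls of finite unions (a fact recalled earlier in the paper), it suffices to prove the theorem for the closed convex hull of a basic closed semialgebraic set $\mathcal W(p_1,\ldots,p_r) \subseteq \R^2$. Pieces of empty interior (points or segments) are handled trivially, so I may assume $W$ has nonempty interior and, after translation, contains the origin.

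By Lasserre's theorem, the task becomes the following: find a $d \in \N$ such that every linear polynomial $\ell$ with $\ell \geq 0$ on $W = \mathcal W(p_1,\ldots,p_r)$ admits a representation $\ell = \sum_e \sigma_e\, p_1^{e_1}\cdots p_r^{e_r}$ with sums of squares $\sigma_e$ of degree $\leq 2d$. The \emph{uniformity} of $d$ over all such $\ell$ is the heart of the matter and is what the paper calls stability of the preordering $\mathcal P(p_1,\ldots,p_r)$, restricted here to the family of linear forms.

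To produce the bound, I would exploit that the boundary of $W$ is one-dimensional. If $\ell$ is nonnegative on $W$ and vanishes somewhere on $\partial W$, it must vanish on an arc of some component curve $\{p_i = 0\}$. Working modulo the principal ideal $(p_i)$ brings the problem into the coordinate ring of a real algebraic curve; there, Scheiderer's deep sums-of-squares results on affine and projective curves (as referenced in the paper) supply a Positivstellensatz for $\bar\ell \in R[x_1,x_2]/(p_i)$ with a degree bound depending only on $\deg(\ell) = 1$. I would then lift: writing $\ell - \sum_e \sigma_e\, p_1^{e_1}\cdots p_r^{e_r} = p_i \cdot h$ with $h$ controlled in degree, and analyzing the sign of $h$ on $W$, one can iterate the argument on the lower-dimensional strata of $\partial W$ (intersections of two boundary curves, and so on) until a full preordering representation of $\ell$ is obtained with degree bounded by a constant $d$ independent of $\ell$. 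Feeding this into Lasserre's theorem concludes that $\overline{\mathrm{conv}(W)} = W$ is a spectrahedral shadow.

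The main obstacle will be establishing the stable Positivstellensatz on curves: controlling sums-of-squares representations on reduced, possibly singular real algebraic curves with a degree bound linear in the input, together with the local-to-global gluing that lets one assemble bounds across the finitely many boundary components. A secondary difficulty is the unbounded case, where the compactification of $W$ introduces points at infinity on the boundary curves; here one must pass to a projective closure and ensure that Scheiderer's projective-curve results can be matched back to affine preordering certificates without degree blow-up. Once these two technical layers are in place, the reduction scheme above pushes the dimension-two Helton--Nie conjecture through.
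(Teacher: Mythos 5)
The paper does not actually prove this theorem; it cites Scheiderer's preprint and notes only that the proof builds on his sums-of-squares results for algebraic curves, so the question is whether your sketch could be made to work on its own terms.

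Your outline correctly identifies two of the right ingredients --- Lasserre's criterion and Scheiderer's curve theory --- but it contains a concrete false step that derails the argument. You assert that if a nonnegative linear polynomial $\ell$ on $W=\mathcal W(p_1,\ldots,p_r)$ vanishes at a boundary point, then $\ell$ must vanish along an arc of some boundary curve $\{p_i=0\}$. This is false: with $W$ the unit disk and $p_1=1-x_1^2-x_2^2$, the linear form $\ell=1-x_1$ is nonnegative on $W$ and vanishes only at the single point $(1,0)$. Generically a supporting line of a convex planar region with curved boundary meets $\partial W$ in a single point, not an arc. Your plan to pass to $\R[x_1,x_2]/(p_i)$ hinges on $\ell$ vanishing identically along a branch of $\{p_i=0\}$; in the typical case $\ell$ restricted to $\{p_i=0\}$ is nonnegative but not identically zero, and you have not indicated how to represent it with uniform degree bounds on the curve, nor how to lift such a representation back to $\R[x_1,x_2]$ while controlling the degree of the correction term $h$. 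A secondary gap: the closure of a basic semialgebraic piece from your disjunctive-normal-form decomposition need not be basic closed (the paper's own football-stadium example is a closed convex semialgebraic set in the plane that is not basic closed), so the reduction to a single $\mathcal W(p_1,\ldots,p_r)$ requires more care than you give it.

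Scheiderer's actual argument is organized differently in a way that sidesteps your false step. His main technical result is that the closed convex hull of any \emph{one-dimensional} semialgebraic set is a spectrahedral shadow, proved through moment-problem duality combined with his stability theorems for sums of squares in coordinate rings of real curves. The two-dimensional statement is then a corollary: a convex semialgebraic set in the plane with nonempty interior is, after handling unboundedness, recovered as the closed convex hull of its one-dimensional boundary. Working at the level of convex hulls of curves from the start removes the need to control where a given nonnegative linear form vanishes relative to the individual defining polynomials $p_i$ --- which is precisely the point on which your sketch stalls.
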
 

\noindent
For a more thorough treatment of the topics in the last two section see for example also \cite{par}.

\section{Non-commutative theory}

In recent years, the theory of non-commutative real geometry has attracted more and more interest, in part motivated by applications in systems engineering and control theory \cite{he1}. The most important algebraic objects are non-commutative polynomials. The non-commutative polynomial ring $$\R\langle z_1,\ldots,z_n\rangle $$ has as its elements  $\R$-linear combination of words in the letters $z_1,\ldots, z_n$, which do not commute. So $z_1z_2$ and $z_2z_1$ are different polynomials. Non-commutative polynomials  are naturally evaluated at tuples of matrices; if $A_1,\ldots,A_n\in{\rm Mat}_s(\R)$, then $p(A_1,\ldots,A_n)\in {\rm Mat}_s(\R)$.
There is an involution on $\R\langle z_1,\ldots, z_n\rangle$ with $z_i^*=z_i$ for all $i$ and $*={\rm id}$ on $\R$. Thus $*$ just reverses the order of variables in a monomial, for example $$(7z_1+z_1z_2)^*=7z_1+z_2z_1.$$ Let $\R\langle z_1,\ldots,z_n\rangle_h$ denote the set of Hermitian polynomials, i.e. fixed points of the involution. For any $p\in\R\langle z_1,\ldots, z_n\rangle_h$ and $A_1,\ldots,A_n\in {\rm Sym}_s(\R)$, the matrix $p(A_1,\ldots,A_n)$ is again symmetric. So for $s\geqslant 1$ and $p_1,\ldots,p_r\in\R\langle z_1,\ldots,z_n\rangle_h$ it makes sense to define $$\mathcal W_s(p_1,\ldots,p_r)=\left\{ (A_1,\ldots,A_n)\in{\rm Sym}_s(\R)^n\mid \forall i \quad p_i(A_1,\ldots,A_n)\succeq 0\right\}$$ and $$\mathcal W(p_1,\ldots,p_r)=\bigcup_{s\geqslant 1} \mathcal W_s(p_1,\ldots, p_r).$$ So such a {\it non-commutative semialgebraic set} consists of a collection of matrix tuples, for all matrix sizes simultaneously. Since it contains much more information than a classical semialgebraic set in $\R^n$, it is not surprising that stronger Positivstellens\"atze can  be proven. Note that $\Sigma^2\R\langle z_1,\ldots, z_n\rangle$ here denotes the set of sums of {\it Hermitian squares}, i.e. sums of elements of the form $p^*p$ with $p\in\R\langle z_1,\ldots, z_n\rangle.$ This is the correct notion to reflect positivity. The following is a non-commutative version of Hilbert's 17th problem, {\it without denominators}:

\begin{Thm}[Helton \cite{he0}] Assume $p\in\R\langle z_1,\ldots, z_n\rangle_h$ fulfills $$p(A_1,\ldots,A_n)\succeq 0$$ for all $A_1,\ldots,A_n\in {\rm Sym}_s(\R)$ and $s\geqslant 1$. Then $p\in\Sigma^2\R\langle z_1,\ldots, z_n\rangle.$
\end{Thm}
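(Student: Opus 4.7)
The plan is to argue by contrapositive: assuming $p \notin \Sigma^2\R\langle z_1,\ldots,z_n\rangle$, I will produce symmetric matrices $A_1,\ldots,A_n$ at which $p$ fails to be positive semidefinite. The strategy mirrors the Hahn--Banach plus GNS argument familiar from operator algebras, with the crucial simplification that a degree bound will force the resulting representation to be finite-dimensional, i.e.\ matricial.

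First I recast the problem as a finite-dimensional convex geometry question. Pick $d$ with $2d \geqslant \deg(p)$, let $V_{2d}$ denote the finite-dimensional space of Hermitian elements of $\R\langle z_1,\ldots,z_n\rangle$ of degree $\leqslant 2d$, and let $\Sigma_d^2 \subseteq V_{2d}$ be the cone of sums $\sum_i q_i^* q_i$ with $\deg(q_i) \leqslant d$. A non-commutative analogue of Lemma \ref{sos} (the top-degree parts of $q_i^* q_i$ are still Hermitian squares and cannot cancel additively) identifies $\Sigma_d^2$ with the image of the positive semidefinite cone under $M \mapsto \mathfrak m_d^* M \mathfrak m_d$, where $\mathfrak m_d$ now ranges over all words of length $\leqslant d$. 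Hence $\Sigma_d^2$ is closed and convex, and the same top-degree argument shows $p \notin \Sigma^2$ implies $p \notin \Sigma_d^2$.

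Hahn--Banach then yields a separating linear functional $L : V_{2d} \to \R$ with $L(p) < 0$ and $L(\Sigma_d^2) \geqslant 0$. Equivalently, the sesquilinear form $\langle f, g\rangle_L := L(g^* f)$ is positive semidefinite on the space $W_d$ of polynomials of degree $\leqslant d$. Quotienting $W_d$ by the null space produces a finite-dimensional real Hilbert space $H$; write $[f]$ for the class of $f$. For each $i$, one defines $A_i[f] := [z_i f]$: symmetry $\langle A_i f, g\rangle_L = \langle f, A_i g\rangle_L$ follows from $z_i^* = z_i$, well-definedness from Cauchy--Schwarz. A short induction on word length gives $w(A_1,\ldots,A_n)[1] = [w(z_1,\ldots,z_n)]$, hence $L(q) = \langle q(A)[1],[1]\rangle_L$ for Hermitian $q$ in the appropriate range. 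Applied to $p$, this would force $L(p) = \langle p(A)[1],[1]\rangle_L$, contradicting the matricial positivity of $p$ as soon as the $A_i$ are genuine symmetric matrices.

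The main obstacle is precisely this last qualification: multiplication by $z_i$ raises degree, so a priori $A_i$ is only a partial operator defined on classes of polynomials of degree $<d$ and valued in classes of degree $\leqslant d$; an arbitrary symmetric extension to all of $H$ would destroy the identity $w(A)[1] = [w]$. Turning $A_i$ into an honest symmetric matrix on $H$ is the technical heart of the proof. The standard fix is a \emph{flat extension} argument: by choosing $L$ (or extending it to polynomials of larger degree) so that the rank of the Hankel-type form $\langle\cdot,\cdot\rangle_L$ stabilizes, one arranges that the classes of polynomials of degree $<d$ already span $H$. Then each $A_i$ is automatically a total, symmetric operator on $H$, which yields a symmetric matrix of size $s = \dim H$ at which $p$ is not positive semidefinite, contradicting the hypothesis.
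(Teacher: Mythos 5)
The survey does not prove Helton's theorem; it only cites \cite{he0} and remarks that the non-commutative proofs are functional-analytic. So I am comparing your sketch with Helton's argument (and its standard streamlinings), not with a proof in this paper. Your overall architecture is the right one: a degree-truncated Gram-matrix cone $\Sigma_d^2$, closedness and a top-degree cancellation argument, Hahn--Banach separation, and a truncated GNS construction with multiplication operators $A_i$. That is essentially the shape of Helton's proof. You also correctly identify the central difficulty: the $A_i$ are only partially defined because multiplication by $z_i$ raises degree.

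There are, however, two genuine gaps in the last step. First, well-definedness of $A_i$ on the quotient does \emph{not} follow from Cauchy--Schwarz in the truncated setting. To show $L(f^*f)=0 \Rightarrow L(f^* z_i^2 f)=0$ you would need Cauchy--Schwarz with $v=z_i^2 f$, which lies outside $W_d$, so the positivity of the form does not apply to it; the inequality you can legitimately write only yields $L(f^* z_i f)=0$. The standard repair is to perturb: add to $L$ a small multiple of a strictly positive functional (e.g.\ $L_0(u^*v)=\delta_{u,v}$ on words of length $\leqslant d$, which is nonnegative on $\Sigma_d^2$ and strictly positive on nonzero squares). Then the form is positive definite on $W_d$, there is no quotient, and $A_i\colon W_{d-1}\to W_d$ is an honest linear map. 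Second, the flat-extension fix is not available in general: a truncated positive functional need not admit a flat extension, and you give no argument that the separating $L$ can be chosen or extended flatly. Fortunately this extra machinery is unnecessary. Extend each partial $A_i$ to an \emph{arbitrary} symmetric matrix $\tilde A_i$ on $H=W_d$ (possible because the compression of $A_i$ to $W_{d-1}$ is symmetric there). Then for a word $w=z_{i_1}\cdots z_{i_k}$ with $k\leqslant 2d$, split in the middle at $j=\lceil k/2\rceil$ and move $\tilde A_{i_1},\dots,\tilde A_{i_j}$ across the inner product by symmetry:
\[
\langle w(\tilde A)\,[1],[1]\rangle \;=\; \bigl\langle \tilde A_{i_{j+1}}\cdots \tilde A_{i_k}[1],\; \tilde A_{i_j}\cdots\tilde A_{i_1}[1]\bigr\rangle.
\]
On each side at most $d$ operators are applied starting from $[1]$, so every intermediate vector lies in $W_{d-1}$, where $\tilde A_i$ agrees with multiplication by $z_i$. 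Hence both sides compute to $\langle [z_{i_{j+1}}\cdots z_{i_k}],[z_{i_j}\cdots z_{i_1}]\rangle = L(w)$, regardless of how $\tilde A_i$ was extended. Summing over $w$ gives $\langle p(\tilde A)[1],[1]\rangle = L(p)<0$, contradicting matrix positivity. So the splitting argument, not flatness, is what closes the proof.
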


\noindent
Here, as in most of the non-commutative results, the proof methods differ quite strongly from the commutative theory. They are much more functional-analytic in nature, and often require some knowledge about operator theory; see also \cite{schm} for more details.
 There are more  Positivstellens\"atze in the spirit of the above, which we don't mention here, see for example \cite{aleknetzthom,he2,he5}.
It is unclear to which extend a Projection Theorem (or quantifier elimination) holds for non-commutative semialgebraic geometry. It is definitely even harder as in the commutative setup, as some examples indicate. 

There is also an interesting notion of convexity in the non-commutative setup (see also \cite{he1}). For $A=(A_1,\ldots,A_n)\in {\rm Sym}_s(\R)^n, B=(B_1,\ldots,B_n)\in {\rm Sym}_r(\R)^n$ and $V\in{\rm Mat}_{s,r}(\R)$ we set $$A\oplus B= \left(\left(\begin{array}{cc}A_1 & 0 \\0 & B_1\end{array}\right),\ldots, \left(\begin{array}{cc}A_n & 0 \\0 & B_n\end{array}\right) \right)\in {\rm Sym}_{s+r}(\R)^n$$ and $$V^tAV=(V^tA_1V,\ldots, V^tA_nV)\in {\rm Sym}_r(\R)^n.$$
\begin{Def}\label{freeconv}
Let $W_s\subseteq{\rm Sym}_s(\R)^n$ be given, for every $s\geqslant 1.$ 
The collection $W=\bigcup_{s\geqslant 1} W_s$ is {\it matrix convex}, if it fulfills the following conditions:
\begin{itemize}
\item[(i)] $A\in W_s, B\in W_t \Rightarrow A\oplus B\in W_{s+t}$ \item[(ii)] $A\in W_s$, $V\in{\rm Mat}_{s,r}(\R), V^tV=I_r$ $\Rightarrow V^tAV\in W_r$.
\end{itemize}
\end{Def}

\noindent
Matrix convexity implies classical convexity for each $W_s$, but is  stronger than this in general. The standard example of a matrix convex set is the following. Let $M_1,\ldots, M_n\in{\rm Sym}_d(\R)$ be given and define $$W_s=\left\{ (A_1,\ldots, A_n)\in {\rm Sym}_s(\R)^n\mid I_s\otimes I_d+A_1\otimes M_1+\cdots +A_n\otimes M_n \succeq 0\right\}.$$ Here $\otimes$ denotes the Kronecker product of matrices. Then the collection $W=\bigcup_{s\geqslant 1} W_s$ is matrix convex. Such a set is also called a {\it non-commutative spectrahedron}. Helton \& McCullough \cite{he4}  prove (under weak additional assumptions)  that every matrix convex non-commutative semialgebraic set is of this form.

\noindent
Clearly intersections of matrix convex sets are matrix convex, and there is thus the notion of the {\it matrix convex hull} of a set $W=\bigcup_{s\geqslant 1} W_s.$ If $W$ already fulfills condition (i) from Definition \ref{freeconv} (as is the case for semialgebraic sets $\mathcal W(p_1,\ldots,p_r)$), then the matrix convex hull is obtained by just adding all compressions $V^tAV$ as in (ii). The matrix convex hull can however behave very badly, even at scalar level:

\begin{Thm}[Alekseev, Netzer \& Thom \cite{aleknetzthom}]
There are $p_1,\ldots,p_r\in\R\langle z_1,\ldots,z_n\rangle_h,$ such that $$\left\{ v^tAv \mid s\geqslant 1, A\in \mathcal W_s(p_1,\ldots,p_r),  v\in \R^s, v^tv=1\right\}\subseteq \R^n$$ is  not semialgebraic (in the classical sense). \end{Thm}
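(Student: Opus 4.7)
The strategy is to exhibit Hermitian polynomials $p_1,\ldots,p_r\in\R\langle z_1,\ldots,z_n\rangle_h$ whose scalar-level image
$$S:=\left\{v^tAv\mid s\geqslant 1,\ A\in\mathcal W_s(p_1,\ldots,p_r),\ v\in\R^s,\ v^tv=1\right\}\subseteq\R^n$$
fails to be semialgebraic in the classical sense. The guiding observation is that each \emph{fixed-size} slice of $S$ is semialgebraic, so failure can only emerge from the union over matrix sizes.

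First I would verify the fixed-size claim. For every $s\geqslant 1$ the set $\mathcal W_s(p_1,\ldots,p_r)\subseteq{\rm Sym}_s(\R)^n$ is semialgebraic: the condition $p_j(A_1,\ldots,A_n)\succeq 0$ unwinds to nonnegativity of the principal minors, a finite conjunction of polynomial inequalities in the entries of the $A_i$. The slice
$$S_s:=\left\{v^tAv\mid A\in\mathcal W_s(p_1,\ldots,p_r),\ v\in\R^s,\ v^tv=1\right\}$$
is then the image of the semialgebraic set $\mathcal W_s\times\{v\in\R^s:v^tv=1\}$ under the polynomial map $(A,v)\mapsto v^tAv$, and hence is semialgebraic by the Projection Theorem. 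Consequently $S=\bigcup_{s\geqslant 1}S_s$ is a countable, monotone union of semialgebraic subsets of $\R^n$.

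Second, I would choose the $p_i$ so that this union picks up structurally new scalar values at infinitely many matrix sizes. The natural candidates are Hermitian polynomials whose finite-dimensional positivity loci depend essentially on the matrix size, such as adaptations of approximate commutation relations that have no exact finite-dimensional solutions. For such $p_i$, compressing larger matrices produces scalar vector states $v^tAv$ unattainable at smaller sizes, and one selects points $p^{(s)}\in S_s\setminus S_{s-1}$ for infinitely many $s$.

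Finally, I would arrange the sequence $\{p^{(s)}\}_s$ to accumulate along a transcendental locus in $\R^n$. Any semialgebraic subset of $\R^n$ admits a finite cylindrical cell decomposition, so its closure cannot contain infinitely many distinct algebraic arcs of unbounded degree. Lying along such a transcendental curve therefore precludes $S$ from being semialgebraic.

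The main obstacle is the middle step: one must genuinely produce, at each new matrix size $s$, scalar values obeying algebraic identities of unbounded complexity, rather than merely filling out more of a single semialgebraic region. The construction in Alekseev--Netzer--Thom accomplishes this by an explicit encoding of an infinite family of algebraically independent constraints into the matrix positivity of a carefully chosen tuple $p_1,\ldots,p_r$, after which the preceding semialgebraicity and cell-decomposition arguments complete the proof.
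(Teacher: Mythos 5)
Your proposal is a strategy sketch rather than a proof, and the gap is precisely in the step you yourself flag as ``the main obstacle.'' The observations you do make are sound and worth keeping: for each fixed $s$ the slice $S_s$ is the image under the polynomial map $(A,v)\mapsto v^tAv$ of a semialgebraic set, hence semialgebraic by the Projection Theorem, and $S=\bigcup_s S_s$ is a monotone union (monotonicity uses the direct-sum closure of $\mathcal W(p_1,\ldots,p_r)$ plus $\mathcal W_1\neq\emptyset$, which you should say explicitly). But this only reduces the problem; a countable monotone union of semialgebraic sets can perfectly well be semialgebraic, so nothing has yet been proved.

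The two remaining steps are where the content lives, and neither is carried out. First, you never exhibit the $p_i$; ``adaptations of approximate commutation relations that have no exact finite-dimensional solutions'' is a plausible guiding intuition, but without a concrete tuple there is nothing to analyze, and you explicitly defer this to the Alekseev--Netzer--Thom construction. Second, even granting a construction, the non-semialgebraicity argument is not correct as stated. The claim that a semialgebraic set ``cannot contain infinitely many distinct algebraic arcs of unbounded degree'' is false in general (a two-dimensional cell meets curves of every degree), and ``points accumulating along a transcendental curve'' does not by itself preclude semialgebraicity, since a semialgebraic region with nonempty interior can contain infinitely many points of any given transcendental curve. To get an actual obstruction you would need something sharper tied to the boundary or local structure of $S$: for instance, that $\partial S$ (or its germ at some point) is a graph whose Puiseux/asymptotic expansion is not algebraic, or that $S$ has infinitely many connected components, or that a line section of $S$ has infinitely many components. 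None of this is established or even precisely formulated. So the proposal correctly frames the problem and makes a valid reduction, but the existence claim itself --- the explicit $p_i$ together with a rigorous witness of non-semialgebraicity --- is exactly what is missing.
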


To finish this review article, we mention a rather surprising application of non-commutative real algebra to group theory. For this let $G$ be a group and $\R[G]$ its group algebra, i.e. its elements are formal (finite) linear combinations of group elements, with multiplication induced by the group operation. There is an involution on $\R[G]$, defined by $g^*:=g^{-1}$ for $g\in G$, and $*={\rm id}$ on $\R$. Then $\Sigma^2\R[G]$ denotes the set of sums of Hermitian squares.

\noindent If $S\subseteq G$ is a finite set, closed under forming inverse elements, define $$\Delta(S)= \vert S\vert\cdot e -\sum_{s\in S} s \in \R[G]_h,$$ and call it the {\it Laplace operator} defined by $S$. Here, $e$ denotes the identity element of $G$. 

\begin{Thm}[Ozawa \cite{oz}]
Let $G$ be finitely generated by $S$. Then $G$ has {\it Kazhdan's Property (T)} if and only if $$\Delta(S)^2-\varepsilon \cdot\Delta(S) \in \Sigma^2\R[G]$$ for some $\varepsilon >0.$
\end{Thm}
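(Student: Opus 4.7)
The plan is to use the standard bridge between algebraic positivity in $\R[G]$ and operator positivity under unitary representations, combined with spectral calculus on the Laplacian. First I would record the preliminary identity
$$\Delta(S)=\tfrac{1}{2}\sum_{s\in S}(e-s)(e-s)^*,$$
which uses $S=S^{-1}$ and shows that $\Delta(S)\in\Sigma^2\R[G]$ already. For any unitary representation $\pi\colon G\to \sU(\sH)$, extended to a $*$-homomorphism $\R[G]\to\sB(\sH)$, the operator $T:=\pi(\Delta(S))$ is therefore positive semidefinite, with $\langle Tv,v\rangle=\tfrac{1}{2}\sum_{s\in S}\|v-\pi(s)v\|^2$, so $\ker T$ coincides with the subspace $\sH^G$ of $G$-invariant vectors.

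For the direction $(\Leftarrow)$, suppose $\Delta(S)^2-\varepsilon\Delta(S)\in\Sigma^2\R[G]$. Then in any unitary representation one has $T^2\succeq\varepsilon T$ with $T\succeq 0$, so by spectral calculus $\mathrm{spec}(T)\subseteq\{0\}\cup[\varepsilon,\infty)$. If $\pi$ has no $G$-invariant vectors then $\ker T=\{0\}$; since $\{0\}$ is isolated in the spectrum, the gap upgrades to $T\succeq\varepsilon I$, which is the uniform Kazhdan spectral gap defining Property~(T).

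For the converse $(\Rightarrow)$, I would assume Property~(T) with Kazhdan constant $\kappa>0$, so that $\pi(\Delta(S))\succeq \kappa I$ whenever $\pi$ has no $G$-invariant vectors. In a general $\pi$, decompose $\sH=\sH^G\oplus(\sH^G)^\perp$; both summands are $\pi(G)$-invariant and hence $T$-invariant, and on $\sH^G$ both sides of $T^2-\kappa T\succeq 0$ vanish, while on $(\sH^G)^\perp$ the inequality follows from $T\succeq \kappa I$. Hence $a:=\Delta(S)^2-\kappa\Delta(S)$ is positive under every unitary representation of $G$, and the task reduces to promoting this representation-theoretic positivity to the algebraic certificate $a\in\Sigma^2\R[G]$.

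This promotion is the main obstacle, since in noncommutative group algebras positivity under every unitary representation does \emph{not} in general imply membership in $\Sigma^2\R[G]$ (a failure intimately tied to the Connes embedding problem). My plan is a Hahn-Banach separation argument on truncated cones: for each $N\in\N$ the cone $C_N$ of sums of Hermitian squares whose summands are supported on words of length $\leqslant N$ is a closed convex cone in the finite-dimensional subspace of $\R[G]$ of elements supported on words of length $\leqslant 2N$. If $a\notin\Sigma^2\R[G]$, separation at each truncation level combined with a weak-$*$ compactness limit would produce a linear functional $\varphi$ on $\R[G]$ with $\varphi(\Sigma^2\R[G])\subseteq[0,\infty)$ but $\varphi(a)\leqslant 0$; the GNS construction applied to $\varphi$ then yields a cyclic unitary representation $\pi_\varphi$ of $G$ with $\langle\pi_\varphi(a)\hat e,\hat e\rangle\leqslant 0$, contradicting the positivity already established. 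The delicate step is to control the truncation level $N$ in terms of $\kappa$ and $|S|$, and it is precisely here that Property~(T) enters again, providing the Archimedean structure on $\R[G]$ through the Laplacian that closes the argument.
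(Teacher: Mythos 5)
Your $(\Leftarrow)$ direction is correct and is the standard argument: $\Delta(S)=\tfrac12\sum_{s\in S}(e-s)(e-s)^*$ makes $T=\pi(\Delta(S))$ positive with kernel exactly the $G$-invariant vectors, and $T^2\succeq\varepsilon T$ forces the spectral gap $\mathrm{spec}(T)\subseteq\{0\}\cup[\varepsilon,\infty)$, giving Property (T). The paper does not prove the theorem itself (it is cited from Ozawa), but this part of your argument matches the known proof.

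The $(\Rightarrow)$ direction, however, has a genuine gap, and you have partly sensed it yourself. The Hahn--Banach separation you describe cannot deliver a contradiction. Separating $a=\Delta^2-\kappa\Delta$ from the closed truncated cones and passing to a weak-$*$ limit produces a state $\varphi$ with $\varphi(\Sigma^2\R[G])\subseteq[0,\infty)$ and only $\varphi(a)\leqslant 0$; but the positivity of $\pi_\varphi(a)$ established from Property (T) likewise only gives $\varphi(a)=\langle\pi_\varphi(a)\hat e,\hat e\rangle\geqslant 0$. The two statements are perfectly compatible, with $\varphi(a)=0$. Worse, if your argument worked as written it would prove that \emph{every} Hermitian element of $\R[G]$ positive under all unitary representations lies in $\Sigma^2\R[G]$, which is false: the cone $\Sigma^2\R[G]$ is in general a proper subcone of its closure, and its closure is precisely the set of elements positive in $C^*_{\max}(G)$. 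No separation argument alone can bridge that gap.

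What is actually used in Ozawa's proof, and what is missing from yours, is the freedom to \emph{shrink} $\varepsilon$: one must not insist on representing $\Delta^2-\kappa\Delta$ for the Kazhdan constant $\kappa$, but rather show that $\Delta^2-\varepsilon\Delta\in\Sigma^2\R[G]$ for \emph{some} smaller $\varepsilon>0$. Ozawa's key technical ingredient is an order-unit (Archimedean) property of $\Delta$ on the finite-dimensional subspace of $\R[G]_h$ where $\Delta^2$ lives: every Hermitian $b$ supported on short words satisfies $R\Delta^m\pm b\in\Sigma^2\R[G]$ for suitable $R,m$. Combined with a closedness argument for the relevant truncated cone, this lets one pass from $\Delta^2-\kappa\Delta\succeq 0$ in $C^*_{\max}(G)$ (equivalently, membership in the closure of $\Sigma^2\R[G]$) to an honest sums-of-squares decomposition of $\Delta^2-\varepsilon\Delta$ after decreasing $\kappa$ to $\varepsilon$. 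Your sketch gestures at this in its final sentence, but that sentence is exactly where the entire difficulty of the theorem resides, and as written the step is not filled in and the separation argument before it cannot be made to work.
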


\noindent
Kazhdan's Property (T) is an abstract property, introduced by Kazhdan in the 1960's, to prove that certain lattices are finitely generated. It is also important when examining random walks on Cayley graphs, and algorithms to produce random group elements \cite{lu, pt}. Ozawa's result is in particular surprising since all known definitions of property (T) refer to the class of unitary representations of the group on Hilbert space. In view of the available semidefinite programming methods for sums of squares, it opens the way for algorithmic approaches towards property (T):

\begin{Thm}[Netzer \& Thom \cite{kazhdan}]
For $G= {\rm SL}_3(\Z)$ (with canonical generating set $S$ of elementary matrices and their inverses) we have $$\Delta(S)^2- \frac16 \Delta(S)\in\Sigma^2\R[G].$$
\end{Thm}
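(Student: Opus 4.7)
The plan is to produce the sum-of-Hermitian-squares representation explicitly, by reducing the question to a finite-dimensional semidefinite feasibility problem, solving it numerically, and then rounding to an exact certificate.

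First, I would reformulate the identity as a Gram-matrix problem. Both $\Delta(S)^2$ and $\Delta(S)$ are supported on the ball $B_2(e)$ of radius $2$ in the Cayley graph of $(G,S)$. Searching for a representation $\Delta(S)^2-\tfrac{1}{6}\Delta(S)=\sum_j p_j^* p_j$ with every $p_j$ supported on $B_1(e)=\{e\}\cup S$ (which has $13$ elements) is, by the Gram-matrix argument of Lemma \ref{sos} transported to the group algebra, equivalent to finding $M\in{\rm Sym}_{13}(\R)$ with $M\succeq 0$, indexed by $B_1(e)$, such that for every $g\in B_2(e)$ the entry sum $\sum_{h_1^{-1}h_2=g} M_{h_1,h_2}$ equals the coefficient of $g$ in $\Delta(S)^2-\tfrac{1}{6}\Delta(S)$. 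This is a concrete semidefinite program: linear equations in the entries of $M$ together with positive semidefiniteness.

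Second, I would exploit the large symmetry of the generating set to cut the SDP down to a manageable size. The set $S$ of twelve elementary matrices and their inverses carries a natural action of the symmetric group on three letters (simultaneous row and column permutation), the matrix transpose involution, and the algebra involution $*$, and each of these fixes $\Delta(S)$. Averaging any feasible Gram matrix over the finite group $H$ that these generate produces another feasible matrix, so one may assume $M$ is $H$-invariant. Decomposing $\R^{B_1(e)}$ into $H$-isotypical components then block-diagonalizes $M$ into a small number of much smaller PSD blocks, which should shrink the SDP to a dimension where an essentially exact solution is within reach.

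Third, I would solve the reduced SDP numerically with an interior-point solver to obtain an accurate floating-point matrix $M_0\succeq 0$ that nearly satisfies the linear constraints. The main obstacle I anticipate is rounding $M_0$ to an exact rational matrix satisfying the linear constraints \emph{exactly} while remaining positive semidefinite. Since the affine subspace defined by the constraints is rational, projecting onto it over $\Q$ is easy; the danger is that projection destroys PSDness. The standard remedy is to target, during the numerical phase, a strictly interior feasible point whose smallest eigenvalue in each symmetry block exceeds the projected rounding error, and then to certify positivity of the rounded matrix by exhibiting an explicit $LDL^t$ (or Cholesky) decomposition over $\Q$. The fact that the constant $\tfrac{1}{6}$ is an unusually clean rational suggests that the slack in the feasible region is substantial enough for this to succeed.

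Finally, the verification is mechanical. From the exact $H$-invariant $M\succeq 0$ one extracts vectors $c_j\in\Q^{13}$ with $M=\sum_j c_jc_j^t$, defines $p_j=\sum_{g\in B_1(e)}(c_j)_g\,g\in\Q[G]$, and confirms the identity $\sum_j p_j^* p_j=\Delta(S)^2-\tfrac{1}{6}\Delta(S)$ by a finite computation in the group algebra. Combining this with Ozawa's theorem then re-proves property (T) for ${\rm SL}_3(\Z)$ with the explicit Kazhdan constant $\varepsilon=\tfrac{1}{6}$.
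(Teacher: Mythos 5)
Your outline matches the paper's overall strategy — reformulate the SOS question as a Gram-matrix semidefinite feasibility problem over the ball $B_1(e)$ (your count $|B_1(e)|=13$ is right), exploit symmetry to shrink the SDP, solve numerically, then upgrade the floating-point solution to an exact certificate — but the final, and most delicate, step is handled differently in the paper than in your proposal. You propose the standard Peyrl--Parrilo route: project the rounded matrix back onto the rational affine subspace of linear constraints and certify positive semidefiniteness by an exact $LDL^t$ factorization, relying on the numerical solution having eigenvalue slack larger than the projection error. The ``additional argument'' that the survey alludes to in Netzer--Thom is instead algebraic and specific to the group-algebra setting. After rounding one obtains an exact identity of the form $\sum_j \tilde p_j^{\,*}\tilde p_j = \Delta(S)^2 - \varepsilon_0\Delta(S) - r$ for some $\varepsilon_0>\tfrac16$ and a small Hermitian error $r$ supported on $B_2(e)$ whose coefficients sum to zero. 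The key lemma is that any such $r$ is dominated by $\Delta$: using the telescoping expansion of $e-g$ for short words $g$ together with the elementary fact that $\Delta=\tfrac12\sum_{s\in S}(e-s)^*(e-s)\in\Sigma^2\R[G]$, one shows $C\Delta(S)+r\in\Sigma^2\R[G]$ for an explicit small $C$ controlled by $\|r\|_1$ and the word-length of its support. Choosing $\varepsilon_0$ so that $\varepsilon_0-\tfrac16\geq C$ then gives $\Delta(S)^2-\tfrac16\Delta(S)=\sum_j\tilde p_j^{\,*}\tilde p_j+\bigl(C\Delta(S)+r\bigr)+(\varepsilon_0-\tfrac16-C)\Delta(S)\in\Sigma^2\R[G]$. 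The advantage of this over your projection-and-Cholesky step is that it does not require the numerical interior point to be quantitatively far from the PSD boundary; it simply trades a little of the Kazhdan constant $\varepsilon$ for the freedom to absorb whatever rounding error occurs. Both routes are legitimate in principle, but the algebraic absorption is the one actually used, and it is arguably more robust. One further remark: in $\R[G]$ there is no leading-term cancellation argument as in Lemma \ref{sos}, so restricting the $p_j$ to be supported on $B_1(e)$ is a genuine ansatz rather than a forced reduction; it happens to succeed here, but you should flag that as an assumption rather than an equivalence.
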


\noindent
The sums of squares representation from this theorem was found numerically, with semidefinite programming via Gram matrices, as described in Section \ref{opt}. The (inexact) numerical solution was then transformed into an exact proof, with an additional argument. Although it was known that ${\rm SL}_3(\Z)$ has property (T), it is interesting that Ozawa's result is of real computational relevance. Also surprising is the explicit factor of $\varepsilon=\frac16$, which is by far larger than any previously known value. This is indeed relevant, since $\varepsilon$ gives information about the rate of convergence of the product replacement algorithm for abelian groups (see for example \cite{lu}).

\begin{bibdiv}
\begin{biblist}

\bib{aleknetzthom}{article}{
   author={Alekseev, V.}
   author={Netzer, T.},
   author={Thom, A.},
   title={Quadratic Modules, $C^*$-Algebras and Free Convexity},
   year={2016},
   journal={Preprint},
}

\bib{artin}{article}{
    AUTHOR = {Artin, E.},
     TITLE = {\"{U}ber die {Z}erlegung definiter {F}unktionen in {Q}uadrate},
   JOURNAL = {Abh. Math. Sem. Univ. Hamburg},
  FJOURNAL = {Abhandlungen aus dem Mathematischen Seminar der Universit\"at
              Hamburg},
    VOLUME = {5},
      YEAR = {1927},
    NUMBER = {1},
     PAGES = {100--115},
}

\bib{pt}{book}{
    AUTHOR = {Bekka, B.},
    AUTHOR={de la Harpe, P.},
    AUTHOR={Valette, A.},
     TITLE = {Kazhdan's property ({T})},
    SERIES = {New Mathematical Monographs},
    VOLUME = {11},
 PUBLISHER = {Cambridge University Press, Cambridge},
      YEAR = {2008},
     PAGES = {xiv+472},
}

\bib{blekh}{article}{
    AUTHOR = {Blekherman, G.},
     TITLE = {There are significantly more nonnegative polynomials than sums
              of squares},
   JOURNAL = {Israel J. Math.},
  FJOURNAL = {Israel Journal of Mathematics},
    VOLUME = {153},
      YEAR = {2006},
     PAGES = {355--380},
 }

\bib{par}{book}{
     TITLE = {Semidefinite optimization and convex algebraic geometry},
    SERIES = {MOS-SIAM Series on Optimization},
    VOLUME = {13},
    EDITOR = {Blekherman, G.},
    editor={Parrilo, P. A.}, 
    editor={Thomas, R. R.},
 PUBLISHER = {Society for Industrial and Applied Mathematics (SIAM),
              Philadelphia, PA; Mathematical Optimization Society,
              Philadelphia, PA},
      YEAR = {2013},
     PAGES = {xx+476},
 }

\bib{bcr}{book} {
    AUTHOR = {Bochnak, J.},
     AUTHOR = {Coste, M.},
      AUTHOR = {Roy, M.-F.},
     TITLE = {Real algebraic geometry},
    SERIES = {Ergebnisse der Mathematik und ihrer Grenzgebiete (3) [Results
              in Mathematics and Related Areas (3)]},
    VOLUME = {36},
      NOTE = {Translated from the 1987 French original,
              Revised by the authors},
 PUBLISHER = {Springer-Verlag, Berlin},
      YEAR = {1998},
     PAGES = {x+430},
}

\bib{br}{article}{
    AUTHOR = {Br{\"a}nd{\'e}n, P.},
     TITLE = {Hyperbolicity cones of elementary symmetric polynomials are
              spectrahedral},
   JOURNAL = {Optim. Lett.},
  FJOURNAL = {Optimization Letters},
    VOLUME = {8},
      YEAR = {2014},
    NUMBER = {5},
     PAGES = {1773--1782},
}

\bib{he0}{article}{
    AUTHOR = {Helton, J. W.},
     TITLE = {``{P}ositive'' noncommutative polynomials are sums of squares},
   JOURNAL = {Ann. of Math. (2)},
  FJOURNAL = {Annals of Mathematics. Second Series},
    VOLUME = {156},
      YEAR = {2002},
    NUMBER = {2},
     PAGES = {675--694},
  }

\bib{he1}{incollection}{
    AUTHOR = {Helton, J. W.},
     AUTHOR = {Klep, I.},
      AUTHOR = {McCullough, S.},
     TITLE = {Free convex algebraic geometry},
 BOOKTITLE = {Semidefinite optimization and convex algebraic geometry},
    SERIES = {MOS-SIAM Ser. Optim.},
    VOLUME = {13},
     PAGES = {341--405},
 PUBLISHER = {SIAM, Philadelphia, PA},
      YEAR = {2013},
}

\bib{he2}{article}{
    AUTHOR = {Helton, J. W.},
     AUTHOR = {Klep, I.},
      AUTHOR = {McCullough, S.},
     TITLE = {The convex {P}ositivstellensatz in a free algebra},
   JOURNAL = {Adv. Math.},
  FJOURNAL = {Advances in Mathematics},
    VOLUME = {231},
      YEAR = {2012},
    NUMBER = {1},
     PAGES = {516--534},
}

\bib{he4}{article}{
    AUTHOR = {Helton, J. W.},
    AUTHOR={McCullough, S.},
     TITLE = {Every convex free basic semi-algebraic set has an {LMI}
              representation},
   JOURNAL = {Ann. of Math. (2)},
  FJOURNAL = {Annals of Mathematics. Second Series},
    VOLUME = {176},
      YEAR = {2012},
    NUMBER = {2},
     PAGES = {979--1013},
}

\bib{he5}{article}{
    AUTHOR = {Helton, J. W.},
     AUTHOR = {McCullough, S.},
      AUTHOR = {Putinar,  M.},
     TITLE = {A non-commutative {P}ositivstellensatz on isometries},
   JOURNAL = {J. Reine Angew. Math.},
  FJOURNAL = {Journal f\"ur die Reine und Angewandte Mathematik},
    VOLUME = {568},
      YEAR = {2004},
     PAGES = {71--80},
}

\bib{heni}{article}{
    AUTHOR = {Helton, J. W.},
    AUTHOR={Nie, J.},
     TITLE = {Semidefinite representation of convex sets},
   JOURNAL = {Math. Program.},
  FJOURNAL = {Mathematical Programming. A Publication of the Mathematical
              Programming Society},
    VOLUME = {122},
      YEAR = {2010},
    NUMBER = {1, Ser. A},
     PAGES = {21--64},
 }

\bib{heni2}{article}{
    AUTHOR = {Helton, J. W.},
    AUTHOR={Nie, J.},
     TITLE = {Sufficient and necessary conditions for semidefinite
              representability of convex hulls and sets},
   JOURNAL = {SIAM J. Optim.},
  FJOURNAL = {SIAM Journal on Optimization},
    VOLUME = {20},
      YEAR = {2009},
    NUMBER = {2},
     PAGES = {759--791},
 }

\bib{hevi}{article}{
    AUTHOR = {Helton, J. W.},
    AUTHOR={Vinnikov, V.},
     TITLE = {Linear matrix inequality representation of sets},
   JOURNAL = {Comm. Pure Appl. Math.},
  FJOURNAL = {Communications on Pure and Applied Mathematics},
    VOLUME = {60},
      YEAR = {2007},
    NUMBER = {5},
     PAGES = {654--674},
}

\bib{km}{article}{
    AUTHOR = {Kuhlmann, S.},
    AUTHOR={Marshall, M.},
     TITLE = {Positivity, sums of squares and the multi-dimensional moment
              problem},
   JOURNAL = {Trans. Amer. Math. Soc.},
  FJOURNAL = {Transactions of the American Mathematical Society},
    VOLUME = {354},
      YEAR = {2002},
    NUMBER = {11},
     PAGES = {4285--4301 (electronic)},
}

\bib{kms}{article}{
    AUTHOR = {Kuhlmann, S.},
    AUTHOR= {Marshall, M.},
    AUTHOR={Schwartz, N.},
     TITLE = {Positivity, sums of squares and the multi-dimensional moment
              problem. {II}},
   JOURNAL = {Adv. Geom.},
  FJOURNAL = {Advances in Geometry},
    VOLUME = {5},
      YEAR = {2005},
    NUMBER = {4},
     PAGES = {583--606},
  }

\bib{kummer}{article}{
	AUTHOR={Kummer, M.},
	TITLE={Determinantal Representations and B\'{e}zoutians},
	JOURNAL={Math. Z.},
	note={to appear},
	}

\bib{las}{article}{
    AUTHOR = {Lasserre, J. B.},
     TITLE = {Convex sets with semidefinite representation},
   JOURNAL = {Math. Program.},
  FJOURNAL = {Mathematical Programming. A Publication of the Mathematical
              Programming Society},
    VOLUME = {120},
      YEAR = {2009},
    NUMBER = {2, Ser. A},
     PAGES = {457--477},
 
}

\bib{las2}{article}{
    AUTHOR = {Lasserre, J. B.},
     TITLE = {Global optimization with polynomials and the problem of
              moments},
   JOURNAL = {SIAM J. Optim.},
  FJOURNAL = {SIAM Journal on Optimization},
    VOLUME = {11},
      YEAR = {2000/01},
    NUMBER = {3},
     PAGES = {796--817},
}

\bib{lof}{article}{
    AUTHOR = {L{\"o}fberg, J.,},
     TITLE = {Pre- and post-processing sum-of-squares programs in practice},
   JOURNAL = {IEEE Trans. Automat. Control},
  FJOURNAL = {Institute of Electrical and Electronics Engineers.
              Transactions on Automatic Control},
    VOLUME = {54},
      YEAR = {2009},
    NUMBER = {5},
     PAGES = {1007--1011},
}

\bib{lu}{article}{
    AUTHOR = {Lubotzky, A.},
    AUTHOR={Pak, I.},
     TITLE = {The product replacement algorithm and {K}azhdan's property
              ({T})},
   JOURNAL = {J. Amer. Math. Soc.},
  FJOURNAL = {Journal of the American Mathematical Society},
    VOLUME = {14},
      YEAR = {2001},
    NUMBER = {2},
     PAGES = {347--363 (electronic)},
}

\bib{ma}{book}{
    AUTHOR = {Marshall, M.},
     TITLE = {Positive polynomials and sums of squares},
    SERIES = {Mathematical Surveys and Monographs},
    VOLUME = {146},
 PUBLISHER = {American Mathematical Society, Providence, RI},
      YEAR = {2008},
     PAGES = {xii+187},
      ISBN = {978-0-8218-4402-1; 0-8218-4402-4},
   MRCLASS = {13J30 (14P10 44A60 90C22 90C26)},
  MRNUMBER = {2383959 (2009a:13044)},
MRREVIEWER = {Markus Schweighofer},
       DOI = {10.1090/surv/146},
       URL = {http://dx.doi.org/10.1090/surv/146},
}

\bib{strip}{article}{
    AUTHOR = {Marshall, M.},
     TITLE = {Polynomials non-negative on a strip},
   JOURNAL = {Proc. Amer. Math. Soc.},
  FJOURNAL = {Proceedings of the American Mathematical Society},
    VOLUME = {138},
      YEAR = {2010},
    NUMBER = {5},
     PAGES = {1559--1567},
}

\bib{motz}{incollection}{
    AUTHOR = {Motzkin, T. S.},
     TITLE = {The arithmetic-geometric inequality},
 BOOKTITLE = {Inequalities ({P}roc. {S}ympos. {W}right-{P}atterson {A}ir
              {F}orce {B}ase, {O}hio, 1965)},
     PAGES = {205--224},
 PUBLISHER = {Academic Press, New York},
      YEAR = {1967},
}

\bib{netznote}{article}{
	author={Netzer, T.},
	title={Reelle Algebraische Geometrie},
	note={Lecture Notes. \url{https://algebra-mathematics.uibk.ac.at/images/documents/teaching/tim_netzer/RAG.pdf}},
}

\bib{habil}{article}{
	author={Netzer, T.},
	title={Spectrahedra and their Shadows},
	note={Habilitation Thesis. \url{https://algebra-mathematics.uibk.ac.at/images/documents/other/Habilitationsschrift.pdf}}
	
}

\bib{neplth}{article}{
    AUTHOR = {Netzer, T.},
    AUTHOR={Plaumann, D.},
    AUTHOR={Thom, A.},
     TITLE = {Determinantal representations and the {H}ermite matrix},
   JOURNAL = {Michigan Math. J.},
  FJOURNAL = {Michigan Mathematical Journal},
    VOLUME = {62},
      YEAR = {2013},
    NUMBER = {2},
     PAGES = {407--420},
   }

\bib{kazhdan}{article}{
   author={Netzer, T.},
   author={Thom, A.},
   title={Kazhdan's Property (T) via Semidefinite Optimization},
   journal={Experimental Mathematics},
   volume={24},
   date={2015},
   number={2},
   pages={1--4},
}

\bib{cliff}{article}{
    AUTHOR = {Netzer, T.},
    AUTHOR={Thom, A.},
     TITLE = {Hyperbolic polynomials and generalized {C}lifford algebras},
   JOURNAL = {Discrete Comput. Geom.},
  FJOURNAL = {Discrete \& Computational Geometry. An International Journal
              of Mathematics and Computer Science},
    VOLUME = {51},
      YEAR = {2014},
    NUMBER = {4},
     PAGES = {802--814},
 }

\bib{schw1}{article}{
    AUTHOR = {Nie, J.},
    AUTHOR={Schweighofer, M.},
     TITLE = {On the complexity of {P}utinar's {P}ositivstellensatz},
   JOURNAL = {J. Complexity},
  FJOURNAL = {Journal of Complexity},
    VOLUME = {23},
      YEAR = {2007},
    NUMBER = {1},
     PAGES = {135--150},
 }

\bib{oz}{article}{
    AUTHOR = {Ozawa, N.},
     TITLE = {Noncommutative real algebraic geometry of {K}azhdan's property
              ({T})},
   JOURNAL = {J. Inst. Math. Jussieu},
  FJOURNAL = {Journal of the Institute of Mathematics of Jussieu. JIMJ.
              Journal de l'Institut de Math\'ematiques de Jussieu},
    VOLUME = {15},
      YEAR = {2016},
    NUMBER = {1},
     PAGES = {85--90},
 }

\bib{pl2}{article}{
    AUTHOR = {Plaumann, D.},
    AUTHOR={Sinn, R.},
    AUTHOR={Speyer, D. E.},
    AUTHOR={Vinzant, C.},
     TITLE = {Computing {H}ermitian determinantal representations of
              hyperbolic curves},
   JOURNAL = {Internat. J. Algebra Comput.},
  FJOURNAL = {International Journal of Algebra and Computation},
    VOLUME = {25},
      YEAR = {2015},
    NUMBER = {8},
     PAGES = {1327--1336},
  }

\bib{pl1}{article}{
    AUTHOR = {Plaumann, D.},
    AUTHOR={Vinzant, C.},
     TITLE = {Determinantal representations of hyperbolic plane curves: an
              elementary approach},
   JOURNAL = {J. Symbolic Comput.},
  FJOURNAL = {Journal of Symbolic Computation},
    VOLUME = {57},
      YEAR = {2013},
     PAGES = {48--60},
}

\bib{pd}{book}{ 
    AUTHOR = {Prestel, A.},
     AUTHOR = {Delzell, Ch. N.},
     TITLE = {Positive polynomials},
    SERIES = {Springer Monographs in Mathematics},
      NOTE = {From Hilbert's 17th problem to real algebra},
 PUBLISHER = {Springer-Verlag, Berlin},
      YEAR = {2001},
     PAGES = {viii+267},
 }

\bib{rago}{article}{
    AUTHOR = {Ramana, M.},
    AUTHOR={Goldman, A. J.},
     TITLE = {Some geometric results in semidefinite programming},
   JOURNAL = {J. Global Optim.},
  FJOURNAL = {Journal of Global Optimization. An International Journal
              Dealing with Theoretical and Computational Aspects of Seeking
              Global Optima and Their Applications in Science, Management
              and Engineering},
    VOLUME = {7},
      YEAR = {1995},
    NUMBER = {1},
     PAGES = {33--50},
 }

\bib{ren}{article}{
    AUTHOR = {Renegar, J.},
     TITLE = {Hyperbolic programs, and their derivative relaxations},
   JOURNAL = {Found. Comput. Math.},
  FJOURNAL = {Foundations of Computational Mathematics. The Journal of the
              Society for the Foundations of Computational Mathematics},
    VOLUME = {6},
      YEAR = {2006},
    NUMBER = {1},
     PAGES = {59--79},
  }

\bib{sch1}{article}{
    AUTHOR = {Scheiderer, C.},
     TITLE = {Sums of squares of regular functions on real algebraic
              varieties},
   JOURNAL = {Trans. Amer. Math. Soc.},
  FJOURNAL = {Transactions of the American Mathematical Society},
    VOLUME = {352},
      YEAR = {2000},
    NUMBER = {3},
     PAGES = {1039--1069},
}

\bib{sch4}{article}{
    AUTHOR = {Scheiderer, C.},
     TITLE = {Distinguished representations of non-negative polynomials},
   JOURNAL = {J. Algebra},
  FJOURNAL = {Journal of Algebra},
    VOLUME = {289},
      YEAR = {2005},
    NUMBER = {2},
     PAGES = {558--573},
   }

\bib{sch2}{article}{
	AUTHOR={Scheiderer, C.},
	TITLE={Semidefinite representation for convex hulls of real algebraic curves},
	JOURNAL={Preprint},
	YEAR={2012},
}

\bib{schm}{article}{
   author={Schm{\"u}dgen, K.},
   title={Noncommutative real algebraic geometry---some basic concepts and
   first ideas},
   conference={
      title={Emerging applications of algebraic geometry},
   },
   book={
      series={IMA Vol. Math. Appl.},
      volume={149},
      publisher={Springer, New York},
   },
   date={2009},
   pages={325--350},
}

\bib{schm4}{article}{
    AUTHOR = {Schm{\"u}dgen, K.},
     TITLE = {The {$K$}-moment problem for compact semi-algebraic sets},
   JOURNAL = {Math. Ann.},
  FJOURNAL = {Mathematische Annalen},
    VOLUME = {289},
      YEAR = {1991},
    NUMBER = {2},
     PAGES = {203--206},

}

\bib{schw2}{article}{
    AUTHOR = {Schweighofer, M.},
     TITLE = {On the complexity of {S}chm\"udgen's positivstellensatz},
   JOURNAL = {J. Complexity},
  FJOURNAL = {Journal of Complexity},
    VOLUME = {20},
      YEAR = {2004},
    NUMBER = {4},
     PAGES = {529--543},

}

\bib{sinn}{article}{
    AUTHOR = {Sinn, R.},
     TITLE = {Algebraic boundaries of {$SO(2)$}-orbitopes},
   JOURNAL = {Discrete Comput. Geom.},
  FJOURNAL = {Discrete \& Computational Geometry. An International Journal
              of Mathematics and Computer Science},
    VOLUME = {50},
      YEAR = {2013},
    NUMBER = {1},
     PAGES = {219--235},
}

\bib{hbsdp}{book}{
     TITLE = {Handbook of semidefinite programming},
    SERIES = {International Series in Operations Research \& Management
              Science, 27},
    EDITOR = {Wolkowicz, H.},
    EDITOR={Saigal, R,},
    EDITOR={Vandenberghe, L.},
      NOTE = {Theory, algorithms, and applications},
 PUBLISHER = {Kluwer Academic Publishers, Boston, MA},
      YEAR = {2000},
     PAGES = {xxviii+654},

}

\end{biblist}
\end{bibdiv}

\end{document}